\newcommand{\Bea}{\begin{eqnarray*}}
\newcommand{\Eea}{\end{eqnarray*}}
\newcommand{\bea}{\begin{eqnarray}}
\newcommand{\eea}{\end{eqnarray}}
\newcommand{\eq}[1]{\begin{align}#1\end{align}}
\newcommand{\f}{\frac}
\newcommand{\C}{\mathbb{C}}
\newcommand{\E}{\mathbb{E}}
\newcommand{\N}{\mathbb{N}}
\newcommand{\R}{\mathbb{R}}
\newcommand{\Pb}{\mathbb{P}}
\newcommand{\CF}{\mathcal{F}}
\newcommand{\comment}[1]{}
\def\g{\gamma}
\def\d{\delta}
\def\s{\sigma}
\def\l{\lambda}
\def\1{\textbf{1}}
\def\tr{\text{tr}}
\newtheorem{thm}{Theorem}[section]
\newtheorem{prop}[thm]{Proposition}
\newtheorem{lem}[thm]{Lemma}
\theoremstyle{definition}
\newtheorem{dfn}[thm]{Definition}
\newtheorem{rem}[thm]{Remark}
\title{Distribution of singular values of random band matrices; Marchenko-Pastur law and more}
\author{Indrajit Jana\footnote{Department of Mathematics, University of California, Davis; ijana@math.ucdavis.edu}, Alexander Soshnikov\footnote{Department of Mathematics, University of California, Davis; soshniko@math.ucdavis.edu}}
\date{\today}
\begin{document}
\maketitle

\begin{abstract}
We consider the limiting spectral distribution of matrices of the form $\f{1}{2b_{n}+1} (R + X)(R + X)^{*}$, where $X$ is an $n\times n$ band
matrix of bandwidth $b_{n}$ and $R$ is a non random band matrix of bandwidth $b_{n}$. We show that the Stieltjes transform
of ESD of such matrices converges to the Stieltjes transform of a non-random measure. And the limiting Stieltjes
transform satisfies an integral equation. For $R=0$, the integral equation yields the Stieltjes transform of the Marchenko-Pastur
law.
\end{abstract}

\emph{\textbf{Keywords:}} \emph{Marchenko-Pastur law, Fixed noise with random band matrices, Norm of random band matrices}

\section{Introduction}  

Random matrices play a crucial role in several scientific research including Nuclear Physics, Signal Processing, Numerical linear algebra etc. In 1950s',
Wigner studied Random Band Matrices (RBM) in the context of Nuclear Physics ~\cite{wigner1955}. Tridiagonal RBM can be used to  approximate
random Schr\"odinger operator. RBM can also be used to model a particle system where interactions are stronger for nearby particles.
Casati et al. studied RBM in the context of quantum chaos ~\cite{casati1990scaling}. A study of RBM in the framework of supersymmetric approach
can be found
in ~\cite{fyodorov1991scaling}. Properties of RBM with strongly fluctuating diagonal entries and sparse RBM were studied by Fyodorov, Mirlin,
and co-authors~\cite{fyodorov1995statistical}, ~\cite{fyodorov1996wigner}. In addition, RBM appear in the studies of conductance fluctuations of
quasi-one dimesnional disordered systems ~\cite{devillard1991statistics}, the kicked quantum rotator ~\cite{scharf1989kicked},
systems of interacting particles in a random potential ~\cite{shepelyansky1994coherent, jacquod1995hidden}.

In this paper, we consider random band matrices
of the form $\f{1}{2b_{n}+1}(R+X)(R+X)^{*}$, where $X$ is an $n\times n$ band matrix of bandwidth $b_{n}$ with iid entries and $R$ is a nonrandom
band matrix.
We study the limiting empirical distribution of the eigenvalues of such matrices.

 Let $M_{n}$ be an $n\times n$ matrix. Let $\l_{1},\ldots,\l_{n}$ be the eigenvalues of $M_{n}$ and
\Bea
\mu_{n}(x,y):=\f{1}{n}\#\{\l_{i},1\leq i\leq n:\Re(\l_{i})\leq x,\Im(\l_{i})\leq y\}
\Eea
be the empirical spectral distribution (ESD) of $M$. Ginibre \cite{ginibre1965statistical} showed that if $M_{n}=\f{1}{\sqrt{n}}X_{n}$, where $x_{ij}$, the entries of $X_{n}$, are iid complex normal variables, then the joint density of $\l_{1},\ldots,\l_{n}$ is given by
\Bea
f(\l_{1},\ldots,\l_{n})=c_{n}\prod_{i<j}|\l_{i}-\l_{j}|^{2}\prod_{i=1}^{n}e^{-n|\l_{i}|^{2}},
\Eea
where $c_{n}$ is the normalizing constant. Using this, Mehta \cite{mehta1967random} showed that $\mu_{n}$ converges to the uniform distribution on the unit disk. Later on Girko \cite{girko1985circular} and Bai \cite{bai1997circular} proved the result under more relaxed assumptions, namely under the assumption that $\E|X_{ij}|^{6}<\infty$. Proving the result only under second moment assumption was open until Tao and Vu \cite{tao2008random, tao2010random}.  

Following the method used by Girko, and Bai, the real part of the Stieltjes transform $m_{n}(z):=\f{1}{n}\sum_{i=1}^{n}\f{1}{\l_{i}-z}$ can be written as
\Bea
m_{nr}(z)&:=&\Re(m_{n}(z))\\
&=&\f{1}{n}\sum_{i=1}^{n}\f{\Re(\l_{i}-z)}{|\l_{i}-z|^{2}}\\
&=&-\f{1}{2}\f{\partial}{\partial\Re(z)}\int_{0}^{\infty}\log x\nu_{n}(dx,z),
\Eea
where $\nu_{n}(\cdot,z)$ is the ESD of $(\f{1}{\sqrt{n}}X_{n}-zI)(\f{1}{\sqrt{n}}X_{n}-zI)^{*}$, and $z\in \C^{+}:=\{z\in \C:\Im(z)>0\}$. And secondly the characteristic function of $\f{1}{\sqrt{n}}X_{n}$ satisfies \cite[section 1]{girko1985circular}
\Bea
\int\int e^{i(ux+vy)}\mu_{n}(dx,dy)=\f{u^{2}+v^{2}}{i4\pi u}\int\int\f{\partial}{\partial s}\left[\int_{0}^{\infty}(\log x)\;\nu_{n}(dx,z)\right]e^{i(us+vt)}\;dtds,
\Eea
for any $uv\neq 0$, and where $z=s+it$. 

So, finding the limiting behaviour of $\nu_{n}(\cdot,z)$ is an essential ingredient in finding the limiting behaviour of $\mu_{n}(\cdot,\cdot)$. However, as described in \cite{tao2010random}, a good estimate of the smallest singular value of random matrix is needed to prove the Circular law. Finding a estimate of the smallest singular value is not a part of this paper. In this article, we will focus on finding the limiting behaviour of $\nu_{n}(\cdot,z)$ for RBM so that it can be used for finding the limiting behaviour of $\mu_{n}(\cdot,\cdot)$ for RBM. 

We consider the limiting ESD of matrices of the form $\f{1}{2b_{n}+1}(R+X)(R+X)^{*}$, where $X$ is an $n\times n$ band matrix of bandwidth $b_{n}$ and $R$ is a non RBM. Silverstein, Bai, and Dozier considered the ESD of $\f{1}{n}(R+X)(R+X)^{*}$ type of matrices where $X$ was $m\times n$ rectangular matrix with iid entries, $R$ was a matrix independent of $X$, and the ratio $\f{m}{n}\to c\in (0,\infty)$   \cite{silverstein1995strong, silverstein1995empirical, dozier2007empirical}. Having the same bandwidth for $R$ and $X$ simplifies the calculation. But we do not think that we need the same bandwidth. Thanks to the referees for pointing this out.

This paper is organized in the following way; in the section \ref{Chapter Band_ESD:section: Main results}, we formulate the band matrix model and state the main results. In section \ref{Cahpter Band_ESD:section: main proof of the theorem}, we give the main idea of the proof. In section \ref{Chapter Band_ESD:Section: Main concentration results}, we prove two concentration results which are the main ingredients of the proof. And in the section \ref{Chapter Band_ESD:section: Appendix}, we provide some tools and the proofs for interested readers.

\section{Main Results}\label{Chapter Band_ESD:section: Main results}
\begin{dfn}[Periodic band matrix] An $n\times n$ matrix $M=(m_{ij})_{n\times n}$ is called a periodic band matrix of bandwidth $b_{n}$ if $m_{ij}=0$ whenever $b_{n}<|i-j|<n-b_{n}$.

$M$ is called a non-periodic band matrix of bandwidth $b_{n}$ if $m_{ij}=0$ whenever $b_{n}<|i-j|$.
\end{dfn}

Notice that in case of a periodic band matrix, the maximum number of non-zero elements in each row is $2b_{n}+1$. On the other hand, in case of a non-periodic band matrix, the number of non-zero elements in a row depends on the index of the row. For example, in the first row there are at most $b_{n}+1$ non-zero elements, and in the $(b_{n}+1)$th row there are at most $2b_{n}+1$ many non-zero elements. In general, the $i$th row of a non-periodic band matrix has at most $b_{n}+i\1_{\{i\leq b_{n}+1\}}+(b_{n}+1)\1_{\{b_{n}+1<i<n-b_{n}\}}+(n+1-i)\1_{\{i\geq n-b_{n}\}}$ many non-zero elements. In any case, the maximum number of non-zero elements is $O(b_{n})$. In this context, let us define two types of index sets. 
 
Let $M=(m_{ij})_{n\times n}$ be a RBM (periodic or non-periodic), then we define 
\begin{align}\label{Chapter Band_ESD:Def: Definition of Ij}
\begin{split}
I_{j}&=\{1\leq k\leq n:m_{jk}\;\text{are not identically zero}\},\\
I_{k}'&=\{1\leq j\leq n:m_{jk}\;\text{are not identically zero}\}.
\end{split}
\end{align}

Notice that in case of periodic band matrices, $|I_{j}|=2b_{n}+1$. Now we proceed to our main results. 

Let $X=(x_{ij})_{n\times n}$ be an $n\times n$ periodic band matrix of bandwidth $b_{n}$, where $b_{n}\to\infty$ as $n\to\infty$. Let $R$ be a sequence of $n\times n$ deterministic periodic band matrices of bandwidth $b_{n}$. Let us denote the ESD of $M$ by $\mu_{M}$. We define $$c_{n}=2b_{n}+1$$ for convenience in writing.  Assume that 
\begin{align}\label{Chapter Band_ESD: Assumptions: main assumptions}
\begin{split}
&(a)\;\text{$\mu_{\f{1}{c_{n}}RR^{*}}$ converges weakly as a measure to $H$, for some non random probability distribution $H$,}\\
&(b)\;\text{$H$ is compactly supported,}\\
&(c)\; \{x_{jk}: \;k\in I_{j},\;1\leq j\leq n\}\;\text{is an iid set of random variables},\\
& (d)\; \E[x_{11}]=0,\E[|x_{11}|^{2}]=1.
\end{split}
\end{align}

Define
\begin{align}\label{Chapter Band_ESD:Def: (Construction) Definition of band matrix}
Y=\f{1}{\sqrt{c_{n}}}(R+\s X),\;\text{where $\s>0$ is fixed.}
\end{align}
For notational convenience, we assume that the band matrix is periodic. However, the following results can easily be extended to the case when the band matrix is not periodic. We will give the outline of the proof in the section \ref{Chapter Band_ESD:Section:Extension of the results to non-periodic band matrices}.

Let $M$ be an $n\times n$ matrix. For convenience, let us introduce the following notation 
\Bea
\{\l_{i}(M):1\leq i\leq n\}&=&\text{eigenvalues of $M$},\\
m_{j}&:=&(m_{1j},m_{2j},\ldots,m_{nj})^{T}\\
\Eea
 It is easy to see that $MM^{*}=\sum_{j=1}^{n}m_{j}m_{j}^{*}$.
 
\begin{dfn}[Poincar\'e inequality]
Let $X$ be a $\R^{d}$ valued random variable with probability measure $\mu$. The random variable $X$ is said to satisfy the Poincar\'e inequality with constant $\kappa>0$, if for all continuously differentiable functions $f:\R^{d}\to\R$,
\Bea
\text{Var}(f(X))\leq\f{1}{\kappa}\E(|\nabla f (X)|^{2}).
\Eea
\end{dfn} 
It can be shown that if $\mu$ satisfies the Poincar\'e inequality with constant $\kappa$, then $\mu\otimes\mu$ also satisfies the Poincar\'e inequality with the same constant $\kappa$ \cite[Theorem 2.5]{guionnet1801lectures}. It can also be shown that if $\mu$ satisfies Poincar\'e inequality and $f:\R^{d}\to \R$ is a continuously differentiable function then 
\bea\label{Chapter Band_ESD:eqn: Anderson tail bound estimate of Poincare random variables}
\Pb_{\mu}\left(|f-\E_{\mu}(f)|>t\right)\leq 2K\exp\left(-\f{\sqrt{\kappa}}{\sqrt{2}\|\|\nabla f\|_{2}\|_{\infty}}t\right),
\eea
where $K=-\sum_{i\geq 0}2^{i}\log(1-2^{-2i-1})$, and $\nabla f$ denotes the gradient of the function $f$. A proof of the above fact can be found in \cite[Lemma 4.4.3]{anderson2010introduction}.

For example, the Gaussian distribution satisfies the Poincar\'e inequality.

\begin{thm}\label{Chapter Band_ESD:Thm: ESD of singular values of random band matrices (with Poincare)}
Let $Y$ be defined in \eqref{Chapter Band_ESD:Def: (Construction) Definition of band matrix}. In addition to the  assumptions made in \eqref{Chapter Band_ESD: Assumptions: main assumptions}, assume that 
\Bea
&&(i)\;\f{(\log n)^{2}}{c_{n}}\to 0,\\
&&(ii)\;\text{Both $\Re(x_{ij})$ and $\Im(x_{ij})$ satisfy Poincar\'e inequality with constant $m$.}
\Eea
Then there exists a non-random probability measure $\mu$ such that $\E|m_{n}(z)-m(z)|\to 0$ uniformly for all $z\in\{z:\Im(z)>\eta\}$ for any fixed $\eta>0$, where $m_{n}(z)=\f{1}{n}\sum_{i=1}^{n}(\l_{i}(YY^{*})-z)^{-1}$ is the Stieltjes transform of ESD of $YY^{*}$, and $m(z)=\int_{\R}\f{d\mu(x)}{x-z}$. In particular, the expected ESD of $YY^{*}$ converges weakly as a measure. In addition, $m(z)$ satisfies
\bea\label{Chapter Band_ESD:eqn: master INTEGRAL equation which is satisfied by m}
m(z)=\int_{\R}\f{dH(t)}{\f{t}{1+\s^{2}m(z)}-(1+\s^{2}m(z))z}\;\;\;\;\text{for any $z\in \C^{+}$}.
\eea
\end{thm}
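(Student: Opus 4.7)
My plan is to adapt the resolvent/Stieltjes-transform technique of Silverstein--Bai--Dozier for information-plus-noise Wishart matrices to the periodic band setting, using the Poincar\'e hypothesis to get quantitative concentration and exploiting the cyclic invariance of the $X$-distribution to control the band-localized partial traces that arise. Since $Y$ is square, $YY^{*}$ and $Y^{*}Y$ have the same eigenvalues, so $m_n(z)=\f{1}{n}\tr\tilde G(z)$ with $\tilde G(z):=(Y^{*}Y-zI)^{-1}$. Applying the Schur complement formula to the $j$th row and column of $Y^{*}Y-zI$ yields
\Bea
\tilde G_{jj}(z)=\f{-1}{z\bigl(1+y_j^{*}G_{(j)}(z)y_j\bigr)},
\Eea
where $y_j=\f{1}{\sqrt{c_n}}(r_j+\s x_j)$ is the $j$th column of $Y$ and $G_{(j)}(z):=(Y_{(j)}Y_{(j)}^{*}-zI)^{-1}$ is the resolvent of $Y$ with its $j$th column deleted.

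The key point is that $y_j$ depends on $X$ only through the column $x_j$, which is independent of $G_{(j)}$. Conditionally on $G_{(j)}$, the quadratic form $y_j^{*}G_{(j)}y_j$ is a Lipschitz function of the $O(c_n)$ iid entries of $x_j$ with Lipschitz constant $O(c_n^{-1/2})$ uniformly on $\{\Im z>\eta\}$; the Poincar\'e tail bound \eqref{Chapter Band_ESD:eqn: Anderson tail bound estimate of Poincare random variables} together with the assumption $(\log n)^{2}/c_n\to 0$ then gives, uniformly in $j$ and with high probability,
\Bea
y_j^{*}G_{(j)}y_j \;=\;\f{1}{c_n}\bigl(r_j^{*}G_{(j)}r_j+\s^{2}\tr(P_jG_{(j)})\bigr)+o(1),
\Eea
where $P_j$ is the coordinate projection onto $I_j'$. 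A parallel application of Poincar\'e to $m_n(z)$, viewed as a Lipschitz function of all iid entries of $X$, shows $m_n(z)-\E m_n(z)=o(1)$ in the same regime.

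Next I substitute this into the Schur identity and then need to identify the partial trace $\f{1}{c_n}\tr(P_jG_{(j)})$ with $m_n(z)$ and the deterministic form $\f{1}{c_n}r_j^{*}G_{(j)}r_j$ with a suitable spectral functional of $RR^{*}/c_n$. The deterministic equivalent I shall aim for is
\Bea
G(z)\;\approx\;(1+\s^{2}m(z))\bigl(c_n^{-1}RR^{*}-(1+\s^{2}m(z))^{2}zI\bigr)^{-1};
\Eea
self-consistency of this Ansatz, in the sense that $\f{1}{n}\tr$ and $\f{1}{n}\sum_j r_j^{*}(\cdot)r_j/c_n$ applied to both sides of the Schur identity agree up to $o(1)$, together with the hypothesis $\mu_{RR^{*}/c_n}\Rightarrow H$, yields the claimed integral equation
\Bea
m(z)=\int_{\R}\f{dH(t)}{\f{t}{1+\s^{2}m(z)}-(1+\s^{2}m(z))z}.
\Eea
Uniqueness of the fixed point among Stieltjes transforms of subprobability measures on $\R_+$, by a standard contraction argument on $\{\Im z>\eta\}$, together with the above concentration, finally promotes pointwise convergence to $\E|m_n(z)-m(z)|\to 0$ uniformly on each such half-plane; weak convergence of the expected ESD follows.

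The main obstacle lies in justifying the deterministic-equivalent step above, in particular the replacement of $\f{1}{c_n}\tr(P_jG_{(j)})$ by $m_n(z)$ uniformly in $j$ and the identification of the averaged column form $\f{1}{n}\sum_j r_j^{*}G_{(j)}r_j/c_n$ with $\int (1+\s^{2}m)\,dH(t)/[t-(1+\s^{2}m)^{2}z]$: the diagonal entries $G_{ii}$ are not individually close to $m_n$ when $R\neq 0$, and the argument must proceed via averages over band-sized windows, which is exactly where the quantitative strength of the Poincar\'e concentration in the regime $(\log n)^{2}/c_n\to 0$ is needed.
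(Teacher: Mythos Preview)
Your overall strategy aligns with the paper's: both reduce to showing that the quadratic forms $y_j^{*}G_{(j)}y_j$ concentrate around a deterministic target involving $m_n$ and the spectral data of $R$, and both single out the replacement of the band-localized partial trace $\f{1}{c_n}\sum_{k\in I_j}(G_{(j)})_{kk}$ by $m_n(z)$ as the new difficulty in the band setting. But you name this obstacle without resolving it, and that is exactly where the paper's new content lies. The paper's mechanism has three pieces. First, by the periodic iid structure, $\E[(C_j^{-1})_{kk}]$ is independent of $k$, so $\E\sum_{k\in I_j}(C_j^{-1})_{kk}=\f{c_n}{n}\E\,\tr C_j^{-1}$; the remaining centered full trace is handled by standard rank-one perturbation bounds. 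Second, for the fluctuation $\sum_{k\in I_j}(C_j^{-1})_{kk}-\E\sum_{k\in I_j}(C_j^{-1})_{kk}$ the paper applies Poincar\'e directly, computing $\nabla\bigl(\sum_{p\in I_j}(C_j^{-1})_{pp}\bigr)=-\f{\s}{\sqrt{c_n}}\tilde M_jY_j$ with $\tilde M_j$ essentially $\hat M_j\hat M_j^{T}$, where $\hat M_j$ is the $n\times c_n$ submatrix of $C_j^{-1}$ formed by the columns indexed by $I_j$. The crucial observation is $\operatorname{rank}\tilde M_j\le c_n$, which converts the Frobenius-norm bound into $\|\tilde M_j\|^{2}\|Y_jY_j^{*}\|$ rather than $n$ times that quantity. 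Third, one then needs $\E\|YY^{*}\|=O(c_n)$, which the paper proves separately (Lemma~\ref{Chapter Band_ESD:Lem: Norm of Y is bounded}) via matrix-exponential inequalities (Lieb, Golden--Thompson) under the Poincar\'e hypothesis together with $c_n\gg(\log n)^{2}$. None of these three ingredients appears in your sketch, and without them the partial-trace replacement---which you correctly flag as the heart of the matter---remains unjustified.

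There is also a smaller inaccuracy: $y_j^{*}G_{(j)}y_j$ is \emph{not} Lipschitz in $x_j$ with constant $O(c_n^{-1/2})$ uniformly, since its gradient contains $\f{\s}{c_n}G_{(j)}(r_j+\s x_j)$ and hence depends on $\|x_j\|$. The tail bound \eqref{Chapter Band_ESD:eqn: Anderson tail bound estimate of Poincare random variables} needs $\|\,\|\nabla f\|_{2}\|_{\infty}$, not its expectation; the paper handles this (for the almost-sure statement) by first truncating the entries of $X$ at level $O(\log n)$, and this truncation is where the hypothesis $(\log n)^{2}/c_n\to 0$ actually enters.
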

In particular, the above result is true for standard Gaussian random variables. The Poincar\'e inequality in the Theorem \ref{Chapter Band_ESD:Thm: ESD of singular values of random band matrices (with Poincare)} simplifies the proof a lot. A similar result can also be obtained without Poincar\'e. However in that case, we prove the Theorem under the assumption that the bandwidth grows sufficiently faster. The Theorem is formulated below.

\begin{thm}\label{Chapter Band_ESD:Thm: ESD of singular values of random band matrices (without Poincare)}
Let $Y$ be defined in \eqref{Chapter Band_ESD:Def: (Construction) Definition of band matrix}. In addition to the assumptions made in \eqref{Chapter Band_ESD: Assumptions: main assumptions}, assume that 
\Bea
&&(i)\;\f{n}{c_{n}^{2}}\to 0,\\
&&(ii)\;\E[|x_{11}|^{4p}]<\infty,\; \text{for some}\; p\in \N.
\Eea
Then there exists a non-random probability measure $\mu$ such that $\E|m_{n}(z)-m(z)|^{2p}\to 0$ uniformly for all $z\in\{z:\Im(z)>\eta\}$ for any fixed $\eta>0$, and the Stieltjes transform of $\mu$ satisfies \eqref{Chapter Band_ESD:eqn: master INTEGRAL equation which is satisfied by m}.
\end{thm}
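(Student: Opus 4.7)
The plan is to mirror the argument used in the proof of Theorem~\ref{Chapter Band_ESD:Thm: ESD of singular values of random band matrices (with Poincare)}, replacing the exponential concentration provided by the Poincaré inequality with martingale and moment bounds that use only the assumed $4p$-th moment of $x_{11}$, and taking the stronger bandwidth hypothesis $n/c_n^2 \to 0$ to compensate for the weaker concentration. The integral equation~\eqref{Chapter Band_ESD:eqn: master INTEGRAL equation which is satisfied by m} is determined only by the first two moments of $x_{11}$ and by $H$, so the candidate limit $m(z)$ is exactly the function produced in Theorem~\ref{Chapter Band_ESD:Thm: ESD of singular values of random band matrices (with Poincare)}; it therefore suffices to establish the $L^{2p}$-convergence $\E|m_n(z) - m(z)|^{2p} \to 0$.

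I would split
$$m_n(z) - m(z) = \bigl[m_n(z) - \E m_n(z)\bigr] + \bigl[\E m_n(z) - m(z)\bigr]$$
and handle the two pieces separately. For the fluctuation term, introduce the column filtration $\CF_k = \sigma(x_{ij}: j \leq k)$ and the martingale differences $\gamma_k = (\E_k - \E_{k-1})\tr(YY^* - zI)^{-1}$. Replacing one column is a rank-one perturbation of $YY^*$, so interlacing gives $|\gamma_k| \leq 2/\Im z$, and Burkholder's inequality yields
$$\E\bigl|m_n(z) - \E m_n(z)\bigr|^{2p} \leq \frac{C_p}{n^{2p}}\,\E\Bigl(\sum_{k=1}^{n} |\gamma_k|^2\Bigr)^{p} \leq \frac{C_p}{(n\,\Im^{2} z)^{p}},$$
which tends to $0$ using only the second moment of $x_{11}$.

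For the bias term, I would apply the Schur / Sherman-Morrison identity to each diagonal entry of $(YY^* - zI)^{-1}$ to obtain an expression of the form
$$\E m_n(z) = \frac{1}{n}\sum_{j=1}^{n}\E\frac{1}{-z - y_j^*\, G_{(j)}\, y_j + (\text{lower-order correction})},$$
where $G_{(j)} = (Y_{(j)} Y_{(j)}^* - z I)^{-1}$ is the resolvent with the $j$-th column of $Y$ removed and $y_j = c_n^{-1/2}(r_j + \sigma x_j)$. The random quadratic form $y_j^* G_{(j)} y_j$ must then be replaced by the deterministic surrogate $c_n^{-1}\bigl[r_j^* G_{(j)} r_j + \sigma^2 \tr(P_{I_j} G_{(j)})\bigr]$, where $P_{I_j}$ projects onto the support of $x_j$. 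The key ingredient is a Rosenthal / Marcinkiewicz--Zygmund bound for iid quadratic forms: using $\E|x_{11}|^{4p} < \infty$, one controls the off-diagonal cross-term $\sum_{k \neq l} x_{jk}\bar x_{jl}(G_{(j)})_{kl}$ together with the diagonal fluctuation $\sum_{k} (|x_{jk}|^2 - 1)(G_{(j)})_{kk}$, and together these produce a per-column $L^{2p}$-error of order $c_n^{-p/2}$. Feeding these errors through the nonlinear self-consistent equation and iterating resolvent identities gives an aggregate bias of order $(n/c_n^2)^{p/2}$, which vanishes precisely under hypothesis (i). Uniform convergence on $\{\Im z > \eta\}$ then follows from pointwise convergence together with the deterministic Lipschitz estimate $|m_n(z)-m_n(w)| \leq |z-w|/\eta^2$ and a finite $\varepsilon$-net argument.

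The main obstacle is the bias analysis: without the exponential tail bound~\eqref{Chapter Band_ESD:eqn: Anderson tail bound estimate of Poincare random variables}, each column contributes only polynomially-small errors, and the delicate point is to show that they combine through the nonlinearity of the self-consistent equation to yield the sharp $(n/c_n^2)^{p/2}$ bound rather than the weaker $(n/c_n)^{1/2}$ that a naive summation would produce. This is exactly what pins the hypothesis on the bandwidth to $n/c_n^2 \to 0$, and explains why the $4p$-th moment (not merely the $2p$-th) is required in order to apply Rosenthal's inequality to the quadratic cross-terms at the desired power.
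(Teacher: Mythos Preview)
Your outline misses the step that is specific to band matrices and that actually forces the hypothesis $n/c_n^2\to 0$. Your Rosenthal/Marcinkiewicz--Zygmund estimate controls
\[
\frac{1}{c_n}\Bigl(x_j^{*}G_{(j)}x_j-\sum_{k\in I_j}(G_{(j)})_{kk}\Bigr)
\]
with $2p$-th moment of order $c_n^{-p}$, exactly as you say. But the quantity $\sum_{k\in I_j}(G_{(j)})_{kk}$ that you call a ``deterministic surrogate'' is \emph{not} deterministic, and more importantly it is not yet the object that appears in the self-consistent equation: what enters~\eqref{Chapter Band_ESD:eqn: master INTEGRAL equation which is satisfied by m} is $m_n=\frac{1}{n}\tr G$, so one still has to compare the partial trace $\sum_{k\in I_j}(G_{(j)})_{kk}$ with $\frac{c_n}{n}\tr G_{(j)}$. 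This comparison is where the factor $n$ (rather than $c_n$) enters. In the paper this is the term $S_3$ of Proposition~\ref{Chapter Band_ESD:Prop: Bound on the difference between trace and quadratic form (without Poincare)}, handled by a column-by-column martingale and Azuma's inequality (Lemma~\ref{Chapter Band_ESD:Lem: Exponential tail bound on the partial trace of the resolvent}), yielding
\[
\E\Bigl|\sum_{k\in I_j}(G_{(j)})_{kk}-\tfrac{c_n}{n}\tr G_{(j)}\Bigr|^{2p}\le K\,n^{p},
\]
and after dividing by $c_n^{2p}$ this is the $n^{p}/c_n^{2p}$ that drives the bandwidth condition. Your narrative attributes the $(n/c_n^2)$-scale to ``nonlinearity of the self-consistent equation'', but if the per-column error were really $c_n^{-1/2}$ in $L^{2p}$ as you claim, then $c_n\to\infty$ alone would suffice; the stronger condition~(i) is needed precisely because the partial-trace fluctuation lives on the $\sqrt{n}$ scale, and your proposal does not mention this step at all.

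A secondary structural difference: the paper does not split $m_n-m$ into fluctuation plus bias. It works directly with the random quantity $\frac{1}{n}\tr B^{-1}-m_n$, where $B$ is built from $m_n$ itself (see~\eqref{Chapter Band_ESD:Eqn:Definitions of A, B, C}), expands it via Sherman--Morrison and a resolvent identity into the five terms of~\eqref{Chapter Band_ESD:eqn: difference between two stieltjes transform is written as sum of five terms}--\eqref{Chapter Band_ESD:eqn: AAAAAA The pivotal equation of estimates}, and bounds each term in $L^{2p}$ by $K n^{p}/c_n^{2p}$ using Proposition~\ref{Chapter Band_ESD:Prop: Bound on the difference between trace and quadratic form (without Poincare)}. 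Convergence of $m_n$ to the unique solution of~\eqref{Chapter Band_ESD:eqn: master INTEGRAL equation which is satisfied by m} then follows from tightness plus the uniqueness argument imported from Dozier--Silverstein. Your fluctuation/bias split is a legitimate alternative, but to complete it you would still need the partial-trace concentration above for the bias term, and that is the piece currently absent.
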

Moreover, if $c_{n}=n^{\alpha}$, where $\alpha>0$, then the $m_{n}(z)$ in Theorem \ref{Chapter Band_ESD:Thm: ESD of singular values of random band matrices (with Poincare)} converges almost surely to $m(z)$. And the same is true for Theorem \ref{Chapter Band_ESD:Thm: ESD of singular values of random band matrices (without Poincare)}, when when $c_{n}=n^{\beta}$ where $\beta=\f{1}{2}+\f{1}{2p}$. We will prove it at the end of the sections \ref{Cahpter Band_ESD:section: main proof of the theorem} and \ref{Chapter Band_ESD:section: Proof of the theorem (with poincare)} respectively.

Notice that if we take $R=0$ and $\sigma=1$, then $H$ is supported only at the real number $0$. In that case \eqref{Chapter Band_ESD:eqn: master INTEGRAL equation which is satisfied by m}, becomes 
\Bea
m(z)(1+m(z))z+1=0,
\Eea
which is the same quadratic equation satisfied by the Stieltjes transform of Marchenko-Pastur law.

Proof of the Theorem \ref{Chapter Band_ESD:Thm: ESD of singular values of random band matrices (without Poincare)} contains the main idea of the proof of both of the Theorems. Main structure of the proof is similar to the method described in \cite{dozier2007empirical}. However in case of band matrices, we need to proof a generalised version of the Lemma 3.1 in \cite{dozier2007empirical}, which is proven in the Propositions \ref{Chapter Band_ESD:Prop: Bound on the difference between trace and quadratic form (without Poincare)} and \ref{Chapter Band_ESD:Prop: Bound on the difference between trace and quadratic form (with Poincare)}. In addition, Lemma \ref{Chapter Band_ESD:Lem: Norm of Y is bounded} gives a large deviation estimate of the norm of a RBM. 

Also, the assumption that $H$ is compactly supported can be weakened by truncating the singular values of $R$ at a threshold of $\log(c_{n})$ and have the same result as the Theorems \ref{Chapter Band_ESD:Thm: ESD of singular values of random band matrices (with Poincare)} and \ref{Chapter Band_ESD:Thm: ESD of singular values of random band matrices (without Poincare)}. But, in that case we need the band width $c_{n}$ to grow a little faster, $\log(c_{n})$ times faster than the existing rate of divergence. We will prove it in the section \ref{Chapter Band_ESD:Section: Truncation of R}.

\section{Proof of Theorem \ref{Chapter Band_ESD:Thm: ESD of singular values of random band matrices (without Poincare)}}\label{Cahpter Band_ESD:section: main proof of the theorem}

Let us define the empirical Stieltjes transform of $YY^{*}$ as $m_{n}=\f{1}{n}\sum_{i=1}^{n}(\l_{i}(YY^{*})-z)^{-1}$. It is clear from the context that $m_{n}$ depends on $z$. So we omit it hereafter to avoid unnecessary cluttering. We introduce the following notations which will be used in the proof of the Theorems.
\eq{\label{Chapter Band_ESD:Eqn:Definitions of A, B, C}
\begin{split}
A&=\f{RR^{*}}{{c_{n}}(1+\s^{2}m_{n})}-\s^{2}zm_{n}I\\
B&=A-zI\\
C&=YY^{*}-zI\\
C_{j}&=C-y_{j}y_{j}^{*}\\
m_{n}^{(j)}&=\f{1}{n}\sum_{i=1}^{n}\left[\l_{i}(YY^{*}-y_{j}y_{j}^{*})-zI\right]^{-1}=\f{1}{n}\sum_{i=1}^{n}(\l_{i}(C_{j}))^{-1}\\
A_{j}&=\f{RR^{*}}{c_{n}(1+\s^{2}m_{n}^{(j)})}-\s^{2}zm_{n}^{(j)}I\\
B_{j}&=A_{j}-zI.
\end{split}
}
Since $YY^{*}=\sum_{j=1}^{n}y_{j}y_{j}^{*}$, we observe that $m_{n}^{(j)}, A_{j},B_{j},C_{j}$ are independent of $y_{j}$. This fact is crucial in our proofs, in particular, in the proof of Proposition \ref{Chapter Band_ESD:Prop: Bound on the difference between trace and quadratic form (without Poincare)}.

\begin{rem}
We notice that the eigenvalues of $A-zI$ are given by $\l_{i}/(1+\s^{2}m_{n})-(1+\s^{2}m_{n})z$, where $\l_{i}$s are eigenvalue of $\f{1}{c_{n}}RR^{*}$. Therefore $\int_{\R}\left[t/(1+\s^{2}m)-(1+\s^{2}m)z\right]^{-1}\;dH(t)$ can be thought of as $\f{1}{n}\tr(A-zI)^{-1}$ for large $n$. So heuristically, proving the Theorem is equivalent to showing that $\f{1}{n}\tr(A-zI)^{-1}-m_{n}\to 0$ as $n\to\infty$. 
\end{rem}

Using the definition \eqref{Chapter Band_ESD:Eqn:Definitions of A, B, C} and Lemma \ref{Chapter Band_ESD:Lem:Sherman-Morrison formula}, we obtain
\Bea
I+zC^{-1}&=&YY^{*}C^{-1}\\
&=&\sum_{j=1}^{n}y_{j}y_{j}^{*}C^{-1}\\
&=&\sum_{j=1}^{n}y_{j}\f{y_{j}^{*}C_{j}^{-1}}{1+y_{j}^{*}C_{j}^{-1}y_{j}}.
\Eea
Taking trace and dividing by $n$ on the both sides, we obtain
\bea\label{Chapter Band_ESD:eqn: stieltjes transform in terms of alphaj}
zm_{n}&=&\f{1}{n}\sum_{j=1}^{n}\f{y_{j}^{*}C_{j}^{-1}y_{j}}{1+y_{j}C_{j}^{-1}y_{j}^{*}}-1\nonumber\\
&=&-\f{1}{n}\sum_{j=1}^{n}\f{1}{1+y_{j}^{*}C_{j}^{-1}y_{j}}.
\eea
Using the resolvent identity,
\Bea
B^{-1}-C^{-1}&=&B^{-1}(YY^{*}-A)C^{-1}\\
&=&\f{1}{c_{n}}B^{-1}\left[RR^{*}+\s RX^{*}+\s XR^{*}+\s^{2}XX^{*}-\f{1}{1+\s^{2}m_{n}}RR^{*}+c_{n}\s^{2}zm_{n}\right]C^{-1}\\
&=&\f{1}{c_{n}}\sum_{j=1}^{n}B^{-1}\left[\f{\s^{2}m_{n}}{1+\s^{2}m_{n}}r_{j}r_{j}^{*}+\s r_{j}x_{j}^{*}+\s x_{j}r_{j}^{*}+\s^{2}x_{j}x_{j}^{*}-\f{c_{n}}{n}\f{1}{1+y_{j}^{*}C_{j}^{-1}y_{j}}\s^{2}\right]C^{-1}.
\Eea
Taking the trace, dividing by $n$, and using \eqref{Chapter Band_ESD:eqn: stieltjes transform in terms of alphaj}, we have
\bea\label{Chapter Band_ESD:eqn: difference between two stieltjes transform is written as sum of five terms}
\f{1}{n}\text{tr}B^{-1}-m_{n}&=&\f{1}{n}\sum_{j=1}^{n}\left[\f{\s^{2}m_{n}}{1+\s^{2}m_{n}}\f{1}{c_{n}}r_{j}^{*}C^{-1}B^{-1}r_{j}+\f{1}{c_{n}}\s x_{j}^{*}C^{-1}B^{-1}r_{j}+\f{1}{c_{n}}\s r_{j}^{*}C^{-1}B^{-1}x_{j}\right.\nonumber\\
&&\left.+\f{1}{c_{n}}\s^{2} x_{j}^{*}C^{-1}B^{-1}x_{j}-\f{1}{1+y_{j}^{*}C_{j}^{-1}y_{j}}\f{1}{n}\s^{2}\tr C^{-1}B^{-1}\right]\nonumber\\
&\equiv&\f{1}{n}\sum_{j=1}^{n}\left[T_{1,j}+T_{2,j}+T_{3,j}+T_{4,j}+T_{5,j}\right].
\eea

For convenience of writing $T_{i,j}$s, let us introduce some notations
\begin{align}\label{Chapter Band_ESD:eqn: Definitions of rho, omega etc.}
\begin{split}
\rho_{j}&=\f{1}{c_{n}}r_{j}^{*}C_{j}^{-1}r_{j},\;\;\;\omega_{j}=\f{1}{c_{n}}\s^{2}x_{j}^{*}C_{j}^{-1}x_{j},\\
\beta_{j}&=\f{1}{c_{n}}\s r_{j}^{*}C_{j}^{-1}x_{j},\;\;\;\g_{j}=\f{1}{c_{n}}\s x_{j}^{*}C_{j}^{-1}r_{j},\\
\hat{\rho}_{j}&=\f{1}{c_{n}}r_{j}^{*}C_{j}^{-1}B^{-1}r_{j},\;\;\;\hat{\omega}_{j}=\f{1}{c_{n}}\s^{2}x_{j}^{*}C_{j}^{-1}B^{-1}x_{j},\\
\hat{\beta_{j}}&=\f{1}{c_{n}}\s r_{j}^{*}C_{j}^{-1}B^{-1}x_{j},\;\;\;\hat{\g}_{j}=\f{1}{c_{n}}\s x_{j}^{*}C_{j}^{-1}B^{-1}r_{j},\\
\alpha_{j}&=1+\f{1}{c_{n}}(r_{j}+\s x_{j})^{*}C_{j}^{-1}(r_{j}+\s x_{j})=1+\rho_{j}+\beta_{j}+\g_{j}+\omega_{j}.
\end{split}
\end{align}

Using Lemma \ref{Chapter Band_ESD:Lem:Sherman-Morrison formula} for $C=C_{j}+y_{j}y_{j}^{*}=C_{j}+\f{1}{c_{n}}(r_{j}+\s x_{j})(r_{j}+\s x_{j})^{*}$ and the above notations, we can compute
\Bea
T_{1,j}&=&\f{1}{c_{n}}\f{\s^{2}m_{n}}{1+\s^{2}m_{n}}\left[r_{j}^{*}C_{j}^{-1}B^{-1}r_{j}-\f{1}{\alpha_{j}}r_{j}^{*}C_{j}^{-1}y_{j}y_{j}^{*}C_{j}^{-1}B^{-1}r_{j}\right]\\
&=&\f{1}{c_{n}\alpha_{j}}\f{\s^{2}m_{n}}{1+\s^{2}m_{n}}\left[\alpha_{j} r_{j}^{*}C_{j}^{-1}B^{-1}r_{j}-\f{1}{c_{n}}r_{j}^{*}C_{j}^{-1}(r_{j}r_{j}^{*}+\s r_{j}x_{j}^{*}+\s x_{j} r_{j}^{*}+\s^{2}x_{j}x_{j}^{*})C_{j}^{-1}B^{-1}r_{j}\right]\\
&=&\f{1}{\alpha_{j}}\f{\s^{2}m_{n}}{1+\s^{2}m_{n}}\left[\alpha_{j} \hat{\rho}_{j}-(\rho_{j}\hat{\rho}_{j}+\rho_{j}\hat{\gamma}_{j}+\beta_{j}\hat{\rho}_{j}+\beta_{j}\hat{\gamma}_{j})\right]\\
&=&\f{1}{\alpha_{j}}\f{\s^{2}m_{n}}{1+\s^{2}m_{n}}[(1+\gamma_{j}+\omega_{j})\hat{\rho}_{j}-(\rho_{j}+\beta_{j})\hat{\gamma}_{j}].\\
\text{Similarly,}&&\\
T_{2j}&=&\f{1}{\alpha_{j}}[(1+\rho_{j}+\beta_{j})\hat{\gamma}_{j}-(\gamma_{j}+\omega_{j})\hat{\rho}_{j}],\\
T_{3,j}&=&\f{1}{\alpha_{j}}[(1+\gamma_{j}+\omega_{j})\hat{\beta}_{j}-(\rho_{j}+\beta_{j})\hat{\omega}_{j}],\\
T_{4,j}&=&\f{1}{\alpha_{j}}[(1+\rho_{j}+\beta_{j})\hat{\omega}_{j}-(\gamma_{j}+\omega_{j})\hat{\beta}_{j}],\\
\text{and},&&\\
T_{5,j}&=&-\f{1}{\alpha_{j}}\f{1}{n}\s^{2}\tr C^{-1}B^{-1}.
\Eea
Using the equations \eqref{Chapter Band_ESD:eqn: stieltjes transform in terms of alphaj} and \eqref{Chapter Band_ESD:eqn: difference between two stieltjes transform is written as sum of five terms} and the above expressions, we can write
\bea\label{Chapter Band_ESD:eqn: AAAAAA The pivotal equation of estimates}
\f{1}{n}\tr B^{-1}-m_{n}&=&\f{1}{n}\sum_{i=1}^{n}\f{1}{\alpha_{j}}\left[\f{1}{1+\s^{2}m_{n}}(\s^{2}m_{n}-\gamma_{j}-\omega_{j})\hat{\rho}_{j}\right.\nonumber\\
&&+\left.\f{1}{1+\s^{2}m_{n}}(1+\rho_{j}+\beta_{j}+\s^{2}m_{n})\hat{\gamma}_{j}+\hat{\beta}_{j}+\hat{\omega}_{j}-\f{1}{n}\s^{2}\tr C^{-1}B^{-1}\right].
\eea

We would like to show that the above quantity converges to zero as $n\to \infty$. Now, we start listing up some basic observations.

Since $x_{ij}$ are iid and $\E[|x_{ij}|^{2}]=1$, by the strong law of large numbers,
\Bea
\f{1}{nc_{n}}\text{tr}XX^{*}=\f{1}{nc_{n}}\sum_{i,j}|x_{ij}|^{2}\stackrel{a.s.}{\to}1.
\Eea
So, $\mu_{\f{1}{c_{n}}XX^{*}}$ is almost surely tight. Using the condition \eqref{Chapter Band_ESD: Assumptions: main assumptions}$(a)$ and Lemma \ref{Chapter Band_ESD:lem: singular value of sum of matrices} we conclude that $\mu_{YY^{*}}$ is almost surely tight. Therefore,
\begin{align*}
\d:=\inf_{n}\int \f{1}{|\l-z|^{2}}d\mu_{YY^{*}}(\l)>0.
\end{align*}
As a result, for any $z\in \C^{+}$, we have
\begin{align}\label{Chapter Band_ESD:eqn: Imzm_n is positive}
\begin{split}
\Im (zm_{n})&=\int \f{\l\Im(z)}{|\l-z|^{2}}\;d\mu_{MM^{*}}(\l)\geq 0,\\
\Im (m_{n})&=\int\f{\Im(z)}{|\l-z|^{2}}\;d\mu_{MM^{*}}(\l)\geq \Im(z)\d> 0.
\end{split}
\end{align}

Let $z\in \C^{+}$, where $\Im(z)$ stands for the imaginary part of $z$. For any Hermitian matrix $M$, $\|(M-zI)^{-1}\|\leq \f{1}{\Im (z)}$. Therefore
\bea\label{Chapter Band_ESD:eqn: bound on the spectral norm of C_j}
\|C^{-1}\|\leq \f{1}{\Im(z)},\;\;\;\|C_{j}^{-1}\|\leq\f{1}{\Im(z)}.
\eea

We also have a similar bound for $B^{-1}$. If $\l$ is an eigenvalue of $\f{1}{c_{n}}RR^{*}$, then $\l(B):=\f{1}{1+\s^{2}m_{n}}\l-(1+\s^{2}m_{n})z$ is the corresponding eigenvalue of $B$. So
\Bea
|\l(B)|\geq|\Im \l(B)|=\left|\f{\s^{2}\Im (m_{n})}{|1+\s^{2}m_{n}|^{2}}\l+\s^{2}\Im (zm_{n})+\Im(z)\right|\geq \Im(z),
\Eea
where the last inequality follows from \eqref{Chapter Band_ESD:eqn: Imzm_n is positive}.

We can do the similar calculations for $B_{j}$. As a result we have
\bea\label{Chapter Band_ESD:eqn: norm of B^-1 is bounded by the imaginary part}
\|B^{-1}\|\leq\f{1}{\Im(z)},\;\;\;\|B_{j}^{-1}\|\leq \f{1}{\Im(z)}.
\eea
Secondly, we would like to estimate the effect of rank one perturbation on $C$ and $B$. More precisely, we would like to estimate $C^{-1}-C_{j}^{-1}$ and $B^{-1}-B_{j}^{-1}$. Using the Lemma \ref{Chapter Band_ESD:Lem: Difference between traces of rank one perturbed matrix is bounded}, we have 
\begin{align}\label{Chapter Band_ESD:eqn: Estimate of m_n-m_nj}
\begin{split}
&\left|\text{tr}(C^{-1}-C_{j}^{-1})\right|\leq\f{1}{|\Im(z)|},\\
&\left|m_{n}-m_{n}^{(j)}\right|=\f{1}{n}\left|\text{tr}(C^{-1}-C_{j}^{-1})\right|\leq\f{1}{n|\Im(z)|}.
\end{split}
\end{align}
Using the estimates \eqref{Chapter Band_ESD:eqn: Imzm_n is positive} for $z\in \C^{+}$, we have 
\Bea
|1+\s^{2}m_{n}|=\f{|z+\s^{2}zm_{n}|}{|z|}\geq\f{1}{|z|}|\Im(z)+\s^{2}\Im(zm_{n})|\geq\f{\Im(z)}{|z|}.
\Eea
Similarly, we also have $|1+\s^{2}m_{n}^{(j)}|\geq \f{\Im(z)}{|z|}$ for $z\in \C^{+}$. 

Therefore, using the estimates \eqref{Chapter Band_ESD:eqn: norm of B^-1 is bounded by the imaginary part},\eqref{Chapter Band_ESD:eqn: Estimate of m_n-m_nj} and the estimate of $\|RR^{*}\|$ from subsection \ref{Chapter Band_ESD:Subsection: Estimate of rhos} we have
\bea\label{Chapter Band_ESD:eqn: Estimate of B^-1-B_j^-1}
\|B^{-1}-B_{j}^{-1}\|&=&\|B^{-1}(B_{j}-B)B_{j}^{-1}\|\nonumber\\
&\leq&\f{1}{|\Im(z)|^{2}}\|B_{j}-B\|\nonumber\\
&=&|m_{n}-m_{n}^{(j)}|\f{\s^{2}}{|\Im(z)|^{2}}\left\|\f{1}{c_{n}(1+\s^{2}m_{n})(1+\s^{2}m_{n}^{(j)})}RR^{*}+zI\right\|\nonumber\\
&\leq&\f{K\s^{2}}{n}.
\eea
Here and in the following estimates, $K>0$ is a constant that depends only on $p,\Im(z)$, and the moments of $x_{ij}$. 

Now, we start estimating several components of the equation \eqref{Chapter Band_ESD:eqn: AAAAAA The pivotal equation of estimates}.

\subsection{Estimates of $\hat{\rho}_{j}$ and $\rho_{j}$}\label{Chapter Band_ESD:Subsection: Estimate of rhos} According to our assumptions we have $\mu_{\f{1}{c_{n}}RR^{*}}\to H$, where $H$ is compactly supported. Therefore, there exists $K>0$ such that
\bea\label{Chapter Band_ESD:eqn: bound on the vector norm of rj}
\|r_{j}\|^{2}=\|r_{j}r_{j}^{*}\|\leq\|RR^{*}\|\leq K c_{n}.
\eea
Using the estimates \eqref{Chapter Band_ESD:eqn: bound on the spectral norm of C_j} and \eqref{Chapter Band_ESD:eqn: norm of B^-1 is bounded by the imaginary part}, we have
\Bea
|\hat{\rho}_{j}|\leq K c_{n},\;\;\;|\rho_{j}|\leq Kc_{n},
\Eea
where $K>0$ is a constant which depends only on the imaginary part of $z$.

\subsection{Estimates of $\gamma_{j}, \beta_{j},\hat{\gamma}_{j}$ and $\hat{\beta}_{j}$} Using Proposition \ref{Chapter Band_ESD:Prop: Bound on the difference between trace and quadratic form (without Poincare)} and equations \eqref{Chapter Band_ESD:eqn: bound on the spectral norm of C_j},\eqref{Chapter Band_ESD:eqn: norm of B^-1 is bounded by the imaginary part}, \eqref{Chapter Band_ESD:eqn: bound on the vector norm of rj}, we have
\Bea
\E[|\gamma_{j}|^{4p}]&=&\f{1}{c_{n}^{4p}}\E\left|x_{j}^{*}C_{j}^{-1}r_{j}r_{j}^{*}(C_{j}^{-1})^{*}x_{j}\right|^{2p}\\
&\leq&\f{K}{c_{n}^{4p}}\E\left|x_{j}^{*}C_{j}^{-1}r_{j}r_{j}^{*}(C_{j}^{-1})^{*}x_{j}-\f{c_{n}}{n}\text{tr}(C_{j}^{-1}r_{j}r_{j}^{*}(C_{j}^{-1})^{*})\right|^{2p}+\f{K}{c_{n}^{2p}n^{2p}}\E\left|r_{j}^{*}C_{j}^{-1}C_{j}^{-1*}r_{j}\right|^{2p}\\
&\leq&\f{Kn^{p}}{c_{n}^{4p}}\|r_{j}r_{j}^{*}\|^{2p}+\f{K}{c_{n}^{2p}n^{2p}|\Im(z)|^{4p}}\|r_{j}\|^{4p}\leq \f{Kn^{p}}{c_{n}^{2p}}+\f{1}{n^{2p}|\Im(z)|^{4p}}\leq \f{Kn^{p}}{c_{n}^{2p}}.
\Eea
Similarly, we can show that 
\Bea
\E[|\beta_{j}|^{4p}]\leq \f{Kn^{p}}{c_{n}^{2p}}.
\Eea

Notice that there are $c_{n}$ many non-trivial elements in the vector $x_{j}$ and $\E[|x_{11}|^{2}]$=1. Therefore $\E\|x_{j}\|^{2}=c_{n}$. Similarly, $$\E\|x_{j}\|^{2p}\leq K c_{n}^{p}.$$ To estimate $\hat{\gamma}_{j}$, we are going to use Proposition \ref{Chapter Band_ESD:Prop: Bound on the difference between trace and quadratic form (without Poincare)}, and equations \eqref{Chapter Band_ESD:eqn: bound on the spectral norm of C_j},\eqref{Chapter Band_ESD:eqn: norm of B^-1 is bounded by the imaginary part} \eqref{Chapter Band_ESD:eqn: bound on the vector norm of rj}, \eqref{Chapter Band_ESD:eqn: Estimate of B^-1-B_j^-1}.
\Bea
\E\left|\hat{\gamma}_{j}\right|^{4p}&=&\f{1}{c_{n}^{4p}}\E\left|x_{j}^{*}C_{j}^{-1}B^{-1}r_{j}\right|^{4p}\\
&\leq&\f{K}{c_{n}^{4p}}\E\left|x_{j}^{*}C_{j}^{-1}B_{j}^{-1}r_{j}\right|^{4p}+\f{K}{c_{n}^{4p}}\E\left|x_{j}^{*}C_{j}^{-1}(B^{-1}-B_{j}^{-1})r_{j}\right|^{4p}\\
&=&\f{K}{c_{n}^{4p}}\E\left|x_{j}^{*}C_{j}^{-1}B_{j}^{-1}r_{j}r_{j}^{*}B_{j}^{-1*}C_{j}^{-1*}x_{j}\right|^{2p}+\f{Kc_{n}^{2p}c_{n}^{2p}}{(nc_{n})^{4p}}\\
&\leq&\f{K}{c_{n}^{4p}}\E\left|x_{j}^{*}C_{j}^{-1}B_{j}^{-1}r_{j}r_{j}^{*}B_{j}^{-1*}C_{j}^{-1*}x_{j}-\f{c_{n}}{n}\text{tr}(C_{j}^{-1}B_{j}^{-1}r_{j}r_{j}^{*}B_{j}^{-1*}C_{j}^{-1*})\right|^{2p}\\
&&+\f{K}{c_{n}^{2p}n^{2p}}\E\left|\text{tr}(C_{j}^{-1}B_{j}^{-1}r_{j}r_{j}^{*}B_{j}^{-1*}C_{j}^{-1*})\right|^{2p}+\f{K}{n^{4p}}\\
&\leq&\f{Kn^{p}}{c_{n}^{2p}}+\f{K}{n^{2p}}+\f{K}{n^{4p}}\leq \f{Kn^{p}}{c_{n}^{2p}}.
\Eea 

Similarly,

\Bea
\E[|\hat{\beta}_{j}|^{4p}]\leq \f{Kn^{p}}{c_{n}^{2p}}.
\Eea

\subsection{Estimates of $\omega_{j}$ and $\hat{\omega}_{j}$}
Using the Proposition \ref{Chapter Band_ESD:Prop: Bound on the difference between trace and quadratic form (without Poincare)}, Lemma \ref{Chapter Band_ESD:Lem: Difference between traces of rank one perturbed matrix is bounded} and the estimates \eqref{Chapter Band_ESD:eqn: bound on the spectral norm of C_j}, \eqref{Chapter Band_ESD:eqn: norm of B^-1 is bounded by the imaginary part}, \eqref{Chapter Band_ESD:eqn: Estimate of m_n-m_nj}, \eqref{Chapter Band_ESD:eqn: Estimate of B^-1-B_j^-1}, we can write
\Bea
\f{1}{\s^{4p}}\E\left|\hat{\omega}_{j}-\f{\s^{2}}{n}\text{tr}C^{-1}B^{-1}\right|^{2p}&=&\f{1}{\s^{4p}}\E\left|\f{1}{c_{n}}\s^{2}x_{j}^{*}C_{j}^{-1}B^{-1}x_{j}-\f{\s^{2}}{n}\text{tr}C^{-1}B^{-1}\right|^{2p}\\
&\leq&\f{K}{c_{n}^{2p}}\E\left|x_{j}^{*}C_{j}^{-1}(B^{-1}-B_{j}^{-1})x_{j}\right|^{2p}+\f{K}{c_{n}^{2p}}\E\left|x_{j}^{*}C_{j}^{-1}B_{j}^{-1}x_{j}-\f{c_{n}}{n}\text{tr}C_{j}^{-1}B_{j}^{-1}\right|^{2p}\\
&&+\f{K}{n^{2p}}\E\left|\text{tr}(C^{-1}-C_{j}^{-1})B^{-1}\right|^{2p}+\f{K}{n^{2p}}\E\left|\text{tr}C_{j}^{-1}(B^{-1}-B_{j}^{-1})\right|^{2p}\\
&\leq&\f{K}{c_{n}^{2p}n^{2p}}\E\|x_{j}\|^{2p}+\f{Kn^{p}}{c_{n}^{2p}}+\f{K}{n^{2p}}+\f{K}{n^{2p}}\leq\f{Kn^{p}}{c_{n}^{2p}}.
\Eea
Similarly, it can be shown that 
\Bea
\f{1}{\s^{4p}}\E\left|\omega_{j}-\s^{2}m_{n}\right|^{2p}=\f{1}{\s^{4p}}\E\left|\omega_{j}-\f{\s^{2}}{n}\text{tr}C^{-1}\right|^{2p}\leq\f{Kn^{p}}{c_{n}^{2p}}.
\Eea

This completes the estimates of the main components of \eqref{Chapter Band_ESD:eqn: AAAAAA The pivotal equation of estimates}. Finally, we notice that if $z\in \C^{+}$, then $\Im(zy_{j}^{*}(C_{j}-zI)^{-1}y_{j})\geq 0$. As a result, we have $|z\alpha_{j}|\geq |\Im(z)|$. 

Plugging in all the above estimates into \eqref{Chapter Band_ESD:eqn: AAAAAA The pivotal equation of estimates}, we obtain
\Bea
\E\left|\f{1}{n}\tr B^{-1}-m_{n}\right|^{2p}&\leq&\f{K}{n}\sum_{j=1}^{n}\f{Kn^{p}}{c_{n}^{2p}}\leq\f{Kn^{p}}{c_{n}^{2p}}\to 0.
\Eea 
Since $|m_{n}|\leq\f{1}{\Im(z)}$, there exists a subsequence $\{m_{n_{k}}\}_{k}$ such that $\{m_{n_{k}}\}_{k}$ converges. Uniqueness of the solution of \eqref{Chapter Band_ESD:eqn: master INTEGRAL equation which is satisfied by m} can be proved in the exact same way as described in \cite[Section 4]{dozier2007empirical}. Also following the same exact procedure as described in \cite[End of section 3]{dozier2007empirical}, it can be proved that
\Bea
\f{1}{n}\tr B_{n_{k}}^{-1}\to \int\f{dH(t)}{\f{t}{1+\s^{2}m(z)}-(1+\s^{2}m(z))z}\;\;\;\text{a.s.}
\Eea
We skip the details here. This completes the proof of the Theorem \ref{Chapter Band_ESD:Thm: ESD of singular values of random band matrices (without Poincare)}.

From the above estimate, we also see that if $c_{n}=n^{\beta}$, where $\beta>\f{1}{2}+\f{1}{2p}$, then $\sum_{n=1}^{\infty}\f{n^{p}}{c_{n}^{2p}}<\infty$. Therefore by Borel-Cantelli Lemma, we can conclude that $\f{1}{n}\tr B^{-1}-m_{n}\to 0$ almost surely. 

\section{Proof of Theorem \ref{Chapter Band_ESD:Thm: ESD of singular values of random band matrices (with Poincare)}}\label{Chapter Band_ESD:section: Proof of the theorem (with poincare)} Proof of this Theorem is exactly same as the proof of Theorem \ref{Chapter Band_ESD:Thm: ESD of singular values of random band matrices (without Poincare)}. We notice that we obtained the bound $O\left(\f{n^{p}}{c_{n}^{2p}}\right)$ using the Proposition \ref{Chapter Band_ESD:Prop: Bound on the difference between trace and quadratic form (without Poincare)}. So while estimating the bounds of several components of equation \eqref{Chapter Band_ESD:eqn: AAAAAA The pivotal equation of estimates}, instead of using the Proposition \ref{Chapter Band_ESD:Prop: Bound on the difference between trace and quadratic form (without Poincare)}, we will use the Proposition \ref{Chapter Band_ESD:Prop: Bound on the difference between trace and quadratic form (with Poincare)}. And by doing so we can obtain that $\E\left|\f{1}{n}\tr B^{-1}-m_{n}\right|^{2}=O(1/c_{n})$. Which will conclude the Theorem \ref{Chapter Band_ESD:Thm: ESD of singular values of random band matrices (with Poincare)}.

To prove the almost sure convergence, we can truncate all the entries of the matrix $X$ at $6\sqrt{\f{2}{\kappa}}\log n$. Let us denote that truncated matrix as $\tilde{X}$. Since $x_{ij}$s satisfy the Poincar\'e inequality, from \eqref{Chapter Band_ESD:eqn: Anderson tail bound estimate of Poincare random variables} we have
\Bea
\Pb\left(|x_{ij}|>t\right)\leq 2K\exp\left(-\sqrt{\f{\kappa}{2}}t\right).
\Eea 
Therefore,
\Bea
\Pb\left(X\neq \tilde{X}\right)\leq 2Kn^{2}\exp\left(-6\log n\right)\leq \f{K}{n^{4}}.
\Eea
Now using the second part of Proposition \ref{Chapter Band_ESD:Prop: Bound on the difference between trace and quadratic form (with Poincare)} and following the same method as described in section \ref{Cahpter Band_ESD:section: main proof of the theorem}, we have
\Bea
\E\left[\left|\f{1}{n}\tr B^{-1}-m_{n}\right|^{2l}\1_{\{X=\tilde{X}\}}\right]\leq K\f{(\log n)^{2l}}{c_{n}^{l}}.
\Eea 
Since $\left|\f{1}{n}\tr B^{-1}\right|,|m_{n}|\leq |\Im z|^{-1}$, we have
\Bea
\E\left[\left|\f{1}{n}\tr B^{-1}-m_{n}\right|^{2l}\right]\leq K\f{(\log n)^{2l}}{c_{n}^{l}}+\f{K}{|\Im z|^{2l}n^{4}}.
\Eea
If $c_{n}=n^{\alpha}$, $\alpha>0$, then taking $l$ large enough and using the Borel-Cantelli Lemma we may conclude the almost sure convergence.

\section{Truncation of $R$}\label{Chapter Band_ESD:Section: Truncation of R} In several estimates, it was convenient when we had bounded $r_{ij}$. However, we can achieve the same results as described in the Theorems \ref{Chapter Band_ESD:Thm: ESD of singular values of random band matrices (without Poincare)}, and Theorem \ref{Chapter Band_ESD:Thm: ESD of singular values of random band matrices (with Poincare)} by truncating the Singular values of $R$. Below, we have described the truncation method by following the same procedure as described in \cite{dozier2007empirical}.

Let $\f{1}{\sqrt{c_{n}}}R=USV$ be the singular value decomposition of $R$, where $S=diag[s_{1},\ldots, s_{n}]$ are the singular values of $R$ and $U$, $V$ are orthonormal matrices. Let us construct a diagonal matrix $S_{\alpha}$ as $S_{\alpha}=diag[s_{1}\1(s_{1}\leq\alpha),\ldots, s_{n}\1(s_{n}\leq\alpha)]$, and consider the matrices $R_{\alpha}=US_{\alpha}V$, $Y_{\alpha}=\f{1}{\sqrt{c_{n}}}(R_{\alpha}+\s X)$. Then by Lemma \ref{Chapter Band_ESD:Lem: CDF is bounded by the rank perturbation}, we have
\Bea
\|\mu_{YY^{*}}-\mu_{Y_{\alpha}Y_{\alpha}^{*}}\|&\leq&\f{2}{n}\text{rank}\left(\f{R}{\sqrt{c_{n}}}-\f{R_{\alpha}}{\sqrt{c_{n}}}\right)\\
&=&\f{2}{n}\sum_{i=1}^{n}\1(s_{i}>\alpha)\\
&=&2H(\alpha^{2},\infty).
\Eea
If we take $\alpha^{2}\to\infty$ for example $\alpha=\log(c_{n})$ then $\|\mu_{YY^{*}}-\mu_{Y_{\alpha}Y_{\alpha}^{*}}\|\to 0$. So without loss of generality we can assume that $\|r_{j}\|^{2}\leq\|RR^{*}\|\leq c_{n}\log(c_{n})$. In that case, we have
\Bea
\|r_{j}\|^{2}=\|r_{j}r_{j}^{*}\|\leq\|RR^{*}\|\leq c_{n}\log(c_{n}).
\Eea
 So, using the estimates \eqref{Chapter Band_ESD:eqn: bound on the spectral norm of C_j} and \eqref{Chapter Band_ESD:eqn: norm of B^-1 is bounded by the imaginary part} we have
\Bea
|\hat{\rho}_{j}|\leq Kc_{n}\log(c_{n}),\;\;\;|\rho_{j}|\leq Kc_{n}\log(c_{n}),
\Eea
where $K>0$ is a constant which depends only on the imaginary part of $z$. Similarly, all the places in the proof of Theorem \ref{Chapter Band_ESD:Thm: ESD of singular values of random band matrices (without Poincare)} we can replace the estimates $|r_{j}r_{j}^{*}|\leq Kc_{n}$ by the estimates $|r_{j}r_{j}^{*}|\leq Kc_{n}\log (c_{n})$.

\section{Extension of the results to non-periodic band matrices}\label{Chapter Band_ESD:Section:Extension of the results to non-periodic band matrices}
The result can easily be extended to non-periodic band matrices. We observe that for the purpose of our proof, the main difference between a periodic and a non-periodic band matrix is the number of elements in certain rows. In the case of a periodic band matrix, the number of non-trivial elements in any row is $|I_{j}|=2b_{n}+1=c_{n}$, which is fixed for any $1\leq j\leq n$. Therefore, in the definition \eqref{Chapter Band_ESD:eqn: Definitions of rho, omega etc.} we divide by $c_{n}$. For a non periodic band matrix $|I_{j}|=b_{n}+i\1_{\{i\leq b_{n}+1\}}+(b_{n}+1)\1_{\{b_{n}+1<i<n-b_{n}\}}(n+1-i)\1_{\{i\geq n-b_{n}\}}=O(b_{n})$. Once in the definition \eqref{Chapter Band_ESD:eqn: Definitions of rho, omega etc.} and in the Proposition \ref{Chapter Band_ESD:Prop: Bound on the difference between trace and quadratic form (without Poincare)}, Proposition \ref{Chapter Band_ESD:Prop: Bound on the difference between trace and quadratic form (with Poincare)} if we replace $c_{n}$ by $|I_{j}|$, everything works out as before.  

\section{Two concentration results}\label{Chapter Band_ESD:Section: Main concentration results} In this section we list two main concentration results which are used in the proofs of the Theorems \ref{Chapter Band_ESD:Thm: ESD of singular values of random band matrices (with Poincare)}, \ref{Chapter Band_ESD:Thm: ESD of singular values of random band matrices (without Poincare)}.

\begin{prop}\label{Chapter Band_ESD:Prop: Bound on the difference between trace and quadratic form (without Poincare)}
Let $M$ be one of $C_{j}^{-1},C_{j}^{-1}B_{j}^{-1}$, and $N$ be one of $C_{j}^{-1}r_{j}r_{j}^{*}C_{j}^{-1*}$ or $C_{j}^{-1}B_{j}^{-1}r_{j}r_{j}^{*}B^{-1*}C_{j}^{-1*}$. Let $x_{j}$ be the $j$th column of $X$ as defined in Theorem \ref{Chapter Band_ESD:Thm: ESD of singular values of random band matrices (without Poincare)}. Let us also assume that $\E|x_{11}|^{4l}<\infty$. Then for any $l\in \N$, 
\Bea
&&\E\left|x_{j}^{*}Mx_{j}-\f{c_{n}}{n}\text{tr}M\right|^{2l}\leq Kn^{l}\\
&&\E\left|x_{j}^{*}Nx_{j}-\f{c_{n}}{n}\text{tr}N\right|^{2l}\leq Kn^{l}\|r_{j}r_{j}^{*}\|^{2l},
\Eea
where $K>0$ is a constant that depends on $l$, $\Im(z)$, and the moments of $x_{j}$, but not on $n$.
\end{prop}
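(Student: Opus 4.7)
The plan is to condition on the randomness external to $x_j$ and reduce to a deterministic-matrix moment estimate. Since $C_j$ and $B_j$ do not involve the $j$-th column of $X$, both $M$ and $N$ are measurable with respect to $\sigma(R,\{x_k\}_{k\ne j})$; under this conditioning the $c_n$ nontrivial entries of $x_j$ are iid with mean $0$, variance $1$, and $\E|x_{11}|^{4l}<\infty$, and $M$, $N$ may be treated as deterministic with norms controlled by the earlier bounds \eqref{Chapter Band_ESD:eqn: bound on the spectral norm of C_j} and \eqref{Chapter Band_ESD:eqn: norm of B^-1 is bounded by the imaginary part}.

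After conditioning I would split
\Bea
x_j^*Mx_j-\f{c_n}{n}\tr M = \Bigl(x_j^*Mx_j-\sum_{k\in I_j'}M_{kk}\Bigr) + \Bigl(\sum_{k\in I_j'}M_{kk}-\f{c_n}{n}\tr M\Bigr)
\Eea
into a centered quadratic form (the first bracket) and a deterministic bias (the second). The centered bracket I would further decompose into its diagonal part $\sum_{k\in I_j'}(|x_{kj}|^2-1)M_{kk}$ and its off-diagonal part $\sum_{k\ne l\in I_j'}\bar x_{kj}M_{kl}x_{lj}$. Rosenthal's inequality treats the diagonal sum directly, while for the off-diagonal bilinear form I would expand the $2l$-th power, use the centering $\E[x_{kj}]=0$ to force pairing of the iid factors, and sum over paired multi-indices to obtain an $L^{2l}$ bound of order $(\|M_{I_j',I_j'}\|_F^2)^l$ plus lower-order terms involving the $4l$-th moment of $x_{11}$. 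Using $\|M_{I_j',I_j'}\|_F^2\le c_n\|M\|^2\le Kc_n\le Kn$ then yields the $O(n^l)$ contribution in the first inequality. For the second inequality, the rank-one identity $N=(C_j^{-1}r_j)(C_j^{-1}r_j)^*$ (and the analog sandwiched by $B_j^{-1}$) gives $\|N\|_F^2=(r_j^*C_j^{-1*}C_j^{-1}r_j)^2\le K\|r_jr_j^*\|^2$, producing the extra $\|r_jr_j^*\|^{2l}$ factor.

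The hard part will be the deterministic bias $\sum_{k\in I_j'}M_{kk}-\f{c_n}{n}\tr M$: the only obvious pointwise bound, using $|M_{kk}|\le\|M\|\le K$, gives $O(c_n)$, and its $2l$-th power $c_n^{2l}$ can exceed the claimed $n^l$ in the regime $n/c_n^2\to 0$. To close this gap I would reintroduce the randomness of $M$ that was provisionally conditioned out: applying Sherman--Morrison (Lemma \ref{Chapter Band_ESD:Lem:Sherman-Morrison formula}) to $C_j^{-1}$ expresses each $(C_j^{-1})_{kk}$ as a simple rational function of a quadratic form built from the $k$-th row of the remaining columns of $Y$, inverted against a matrix independent of that row. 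A second application of Rosenthal's inequality across $k\in I_j'$ should then yield an $L^{2l}$ bound of order $n^l$ on the bias as well. For $N$, the analogous bias is absorbed into the slack factor $\|r_jr_j^*\|^{2l}$ since $\|N\|$ itself carries that factor. Combining the two contributions via the triangle inequality delivers the proposition.
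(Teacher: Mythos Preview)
Your decomposition into a centered quadratic form and a bias term, and your further split of the centered piece into diagonal and off-diagonal parts, matches the paper's $S_1,S_2,S_3$ exactly; your treatment of the diagonal and off-diagonal contributions by moment expansion is also essentially what the paper does.

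The gap is in your handling of the bias term $\sum_{k\in I_j'}M_{kk}-\tfrac{c_n}{n}\tr M$. You correctly identify that the naive bound $O(c_n^{2l})$ is too weak, but the proposed remedy --- Sherman--Morrison followed by a ``second application of Rosenthal's inequality across $k\in I_j'$'' --- does not work as stated: the diagonal entries $(C_j^{-1})_{kk}$ for different $k$ all depend on the same minor of $YY^*$, so the summands are far from independent and Rosenthal does not apply. Your description also omits any mechanism forcing the bias to be centered; without that, concentration alone cannot control its $2l$-th moment.

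The paper closes the gap with two ingredients you are missing. First, by the i.i.d.\ structure (periodicity), $\E[M_{kk}]$ does not depend on $k$, so $\E\sum_{k\in I_j'}M_{kk}=\tfrac{c_n}{n}\E\tr M$; this is the centering you need. Second, for the fluctuations of the partial trace $\sum_{k\in I_j'}M_{kk}$ (and of $\tr M$) around their expectations, the paper uses a column-by-column martingale decomposition together with the rank-one perturbation bound of Lemma~\ref{Chapter Band_ESD:Lem: Effect of rank one perturbation on the partial trace of resolvent}, feeding bounded martingale increments into Azuma's inequality (Lemma~\ref{Chapter Band_ESD:Lem: Azuma's martingale sum}, packaged as Lemma~\ref{Chapter Band_ESD:Lem: Exponential tail bound on the partial trace of the resolvent}). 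This yields sub-Gaussian tails with variance proxy $O(n)$ and hence $2l$-th moments of order $n^l$, which is exactly the claimed bound. Replacing your Sherman--Morrison/Rosenthal step by this exchangeability-plus-Azuma argument will complete the proof.
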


\begin{proof}
From the estimates \eqref{Chapter Band_ESD:eqn: bound on the spectral norm of C_j} and \eqref{Chapter Band_ESD:eqn: norm of B^-1 is bounded by the imaginary part} we know that $\|C_{j}^{-1}\|\leq 1/|\Im(z)|$ and $\|B_{j}^{-1}\|\leq 1/|\Im(z)|$. So for convenience of writing the proof, let us assume that $\|M\|\leq 1$ and $\|N\|\leq \|r_{j}r_{j}^{*}\|$. Also without loss of generality, we can assume that $j=1$, and recall the definition of $I_{j}$ from \eqref{Chapter Band_ESD:Def: Definition of Ij}. We can write $M=P+iQ$, where $P$ and $Q$ are the real and imaginary parts of $M$ respectively. Then we can write
\Bea
\E\left|x_{j}^{*}Mx_{j}-\f{c_{n}}{n}\text{tr}M\right|^{2l}\leq 2^{2l-1}\E\left|x_{1}^{*}Px_{1}-\f{c_{n}}{n}\text{tr}P\right|^{2l}+2^{2l-1}\E\left|x_{1}^{*}Qx_{1}-\f{c_{n}}{n}\text{tr}Q\right|^{2l}.
\Eea
We can write the first part as
\Bea
\left|x_{1}^{*}Px_{1}-\f{c_{n}}{n}\text{tr}P\right|^{2l}&=&\left|x_{1}^{*}Px_{1}-\sum_{k\in I_{1}}P_{kk}+\sum_{k\in I_{1}}P_{kk}-\f{c_{n}}{n}\text{tr}P\right|^{2l}\\
&\leq&3^{2l-1}\E\left[\sum_{k\in I_{1}}(|x_{1k}|^{2}-1)P_{kk}\right]^{2l}+3^{2l-1}\E\left[\sum_{\substack{i\neq j\\ i,j\in I_{1}}}P_{ij}\overline{x_{1i}}x_{1j}\right]^{2l}\\
&&+3^{2l-1}\left|\sum_{k\in I_{1}}P_{kk}-\f{c_{n}}{n}\text{tr}P\right|^{2l}\\
&=:&3^{2l-1}(S_{1}+S_{2}+S_{3}).
\Eea
Following the same procedure as in \cite{silverstein1995empirical}, we can estimate the first part. Note that $\|P^{m}\|\leq\|P\|^{m}\leq\|M\|^{m}\leq 1$ for any $m\in \N$. In the expansion of $\left[\sum_{k\in I_{1}}(|x_{1k}|^{2}-1)P_{kk}\right]^{2l}$, the maximum contribution (in terms of $c_{n}$) will come from the terms like
\Bea
\sum_{k_{1},\ldots, k_{l}\in I_{1}}(|x_{1k_{1}}|^{2}-1)^{2}\cdots(|x_{1k_{l}}|^{2}-1)^{2}(P_{i_{1}i_{1}}\cdots P_{i_{l}i_{l}})^{2},
\Eea
when all $i_{1},\ldots, i_{l}$ are distinct. Note that $(P_{i_{1}i_{1}}\cdots P_{i_{l}i_{l}})^{2}\leq 1$. Consequently, expectation of the above term is bounded by $K c_{n}^{l}$, where $K$ depends only on the fourth moment of $x_{ij}$. Therefore
\Bea
S_{1}=\E\left[\sum_{k\in I_{1}}(|x_{1k}|^{2}-1)P_{kk}\right]^{2l}\leq K c_{n}^{l},
\Eea
where $K$ depends only on $l$ and the moments of $x_{ij}$.

Since $C_{1}^{-1},C_{1}^{-1}B_{1}^{-1},C_{1}^{-1}r_{1}r_{1}^{*}C_{1}^{-1*}$ or $C_{1}^{-1}B_{1}^{-1}r_{1}r_{1}^{*}B^{-1*}C_{1}^{-1*}$ are independent of $x_{1}$, for the second sum we have
\Bea
\sum_{\substack{i_{1}\neq j_{1},\ldots,i_{2l}\neq j_{2l}\\ i_{1},j_{1},\ldots,i_{2l},j_{2l}\in I_{1}}} \E[P_{i_{1}j_{1}}\cdots P_{i_{2l}j_{2l}}]\E[\overline{x_{1i_{1}}}x_{1j_{1}}\cdots \overline{x_{1i_{2l}}}x_{1j_{2l}}].
\Eea
The expectation will be zero if a term appears only once and the maximum contribution (in terms of $c_{n}$) will come from the case when each of  $x_{1j}$ and $\overline{x_{1j}}$ appears only twice. In that case, the contribution is 
\Bea
\sum_{\substack{i_{1}\neq j_{1}\\ i_{1},j_{1}\in I_{1}}}P_{i_{1}j_{1}}^{2}\cdots\sum_{\substack{i_{l}\neq j_{l}\\ i_{l},j_{l}\in I_{1}}}P_{i_{l}j_{l}}^{2}\leq  c_{n}^{l},
\Eea
where the last inequality follows from the fact that $\sum_{i,j\in I_{1}}P_{ij}^{2}=\text{tr}(LPL^{T}LP^{T}L^{T})\leq c_{n}$, where $L_{c_{n}\times n}$ is the projection matrix onto the co-ordinates indexed by $I_{1}$. As a result, we have
\Bea
S_{2}=\E\left[\sum_{\substack{i\neq j\\ i,j\in I_{1}}}P_{ij}\overline{x_{1i}}x_{1j}\right]^{2l}\leq Kc_{n}^{l},
\Eea
where $K$ depends only on $l$ and the moments of $x_{ij}$.

To estimate the $S_{3}$, we can write it as
\Bea
S_{3}=\left|\sum_{k\in I_{1}}P_{kk}-\f{c_{n}}{n}\text{tr}P\right|^{2l}&=&2^{2l-1}\left|\sum_{k\in I_{1}}P_{kk}-\E\sum_{k\in I_{1}}P_{kk}\right|^{2l}+2^{2l-1}\left|\E\sum_{k\in I_{1}}P_{kk}-\f{c_{n}}{n}\text{tr}P\right|^{2l}.
\Eea
Since $|P_{kk}-\E[P_{kk}]|\leq|(C_{1}^{-1})_{kk}-\E[(C_{1}^{-1})_{kk}]|$, from Lemma \ref{Chapter Band_ESD:Lem: Exponential tail bound on the partial trace of the resolvent} we have an exponential tail bound on

 $\left|\sum_{k\in I_{1}}P_{kk}-\E\sum_{k\in I_{1}}P_{kk}\right|$. As a result,
\bea\label{Chapter Band_ESD:Eqn: The equation responsible of n^l order in without poincare proposition}
\E\left|\sum_{k\in I_{1}}P_{kk}-\E\sum_{k\in I_{1}}P_{kk}\right|^{2l}\leq K n^{l},
\eea
where $K$ depends only on $l$.

Since $x_{ij}$ are iid, for any choice of $M$, we have $\E[m_{11}]=\E[m_{ii}]$. Which implies that $\E[\sum_{k\in I_{1}}P_{kk}]=\f{c_{n}}{n}\E[\text{tr}P]$. Therefore from Lemma \ref{Chapter Band_ESD:Lem: Exponential tail bound on the partial trace of the resolvent}, we have
\Bea
\left|\E\sum_{k\in I_{1}}P_{kk}-\f{c_{n}}{n}\text{tr}P\right|^{2l}&=&\f{c_{n}^{2l}}{n^{2l}}\left|\E[\text{tr}P]-\text{tr}P\right|^{2l}\\
&\leq&K\f{c_{n}^{2l}}{n^{l}}\leq K c_{n}^{l},
\Eea
where $K$ depends only on $l$. Hence we have
\Bea
S_{3}\leq K(n^{l}+c_{n}^{l}).
\Eea
Combining all the above estimates, we get 
\Bea
\E\left|x_{1}^{*}Px_{1}-\f{c_{n}}{n}\text{tr}P\right|^{2l}\leq Kn^{l}.
\Eea 
Repeating the above computation, we can do the same estimate $\E\left|x_{1}^{*}Qx_{1}-\f{c_{n}}{n}\text{tr}Q\right|^{2l}\leq K n^{l}$. This completes the proof.\\\\

\end{proof}

\begin{lem}[Norm of a random band matrix]\label{Chapter Band_ESD:Lem: Norm of Y is bounded} 
Let $X$ and $Y$ be defined in \eqref{Chapter Band_ESD:Def: (Construction) Definition of band matrix}, $x_{ij}$ satisfy the Poincar\'e inequality with constant $m$, and $c_{n}>(\log n)^{2}$. Then $\E\|XX^{*}\|\leq K c_{n}^{2}$ for some universal constant $K$ which may depend on the Poincar\'e constant $m$. In particular, if the limiting ESD of $\f{1}{c_{n}}RR^{*}$ i.e., $H$ is compactly supported then $\E\|YY^{*}\|\leq Kc_{n}$.
\end{lem}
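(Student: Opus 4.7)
The plan is to bound $\|XX^*\|=\|X\|^2$ by combining the submultiplicative inequality $\|X\|^2 \le \|X\|_1 \|X\|_\infty$ with Poincar\'e concentration for the row and column norms of $X$. Writing $R_i$ for the $i$-th row of $X$ (supported on the $c_n$ coordinates in $I_i$) and $C_j$ for the $j$-th column, Cauchy-Schwarz applied row-wise and column-wise yields
\begin{align*}
\|X\|_\infty \le \sqrt{c_n}\,\max_i \|R_i\|, \qquad \|X\|_1 \le \sqrt{c_n}\,\max_j \|C_j\|,
\end{align*}
so that $\|X\|^2 \le c_n\,\max_i\|R_i\|\cdot\max_j\|C_j\|$ deterministically. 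This reduces the task to controlling the two random maxima.

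The next step uses the Poincar\'e hypothesis. Each $\|R_i\|$ is a $1$-Lipschitz function of the real and imaginary parts of the entries of row $i$, and the tensorization property of the Poincar\'e inequality transfers the constant $m$ to the full product measure on the entries of $X$. Since $\E\|R_i\| \le \sqrt{\E\|R_i\|^2} = \sqrt{c_n}$, the exponential tail bound \eqref{Chapter Band_ESD:eqn: Anderson tail bound estimate of Poincare random variables} combined with a union bound over the $n$ rows yields $\E\max_i\|R_i\| \le \sqrt{c_n} + C\log n$. This is where the hypothesis $c_n > (\log n)^2$ enters decisively: it absorbs the logarithmic correction and gives $\E\max_i\|R_i\| \le K\sqrt{c_n}$. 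Moreover, $\max_i\|R_i\|$ is itself $1$-Lipschitz in the entries of $X$, so Poincar\'e provides $\mathrm{Var}(\max_i\|R_i\|) \le 1/m$, whence $\E(\max_i\|R_i\|)^2 \le K c_n$; symmetric estimates hold for $\max_j \|C_j\|$.

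Combining the above via Cauchy-Schwarz,
\[
\E\|X\|^2 \le c_n\sqrt{\E(\max_i\|R_i\|)^2\cdot\E(\max_j\|C_j\|)^2} \le K c_n^2,
\]
which is the main assertion. For the ``in particular'' clause, one uses $\|YY^*\| \le \frac{2}{c_n}\bigl(\|RR^*\| + \s^2\|XX^*\|\bigr)$ together with the bound $\|RR^*\| \le K c_n$ coming from the compact support of $H$ (cf.\ Subsection \ref{Chapter Band_ESD:Subsection: Estimate of rhos}), which gives $\E\|YY^*\| \le K' c_n$. The main technical obstacle is arranging the exponential concentration so that it survives the union bound over the $n$ rows and columns; controlling this log-factor is precisely what forces the assumption $c_n > (\log n)^2$.
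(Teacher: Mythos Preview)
Your argument is correct and substantially more elementary than the paper's. The paper proceeds via matrix-exponential concentration: it bounds the tail of $\|XX^*\|/c_n$ through the trace exponential, invokes Lieb's concavity theorem to peel off the column contributions one at a time, and in the general Poincar\'e case combines this with the Golden--Thompson inequality and the Poincar\'e tail estimate on the scalar moment generating function $\E e^{\gamma/\sqrt{c_n}}$. This machinery produces an exponential tail bound on $\|XX^*\|$ itself, from which the expectation bound follows by integration. Your route bypasses the matrix-exponential apparatus entirely: the interpolation inequality $\|X\|_{op}^2\le\|X\|_1\|X\|_\infty$ reduces the problem to scalar maxima of row and column $\ell^2$-norms, and scalar Poincar\'e concentration (first on each $\|R_i\|$, then on the max via its $1$-Lipschitz property) closes the argument. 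The trade-off is that the paper's method yields a genuine tail inequality for the operator norm, whereas your argument as written gives only the expectation bound; for the lemma statement and its sole application in Proposition~\ref{Chapter Band_ESD:Prop: Bound on the difference between trace and quadratic form (with Poincare)} (where only $\E\|Y_jY_j^*\|$ enters), the expectation bound is exactly what is needed.
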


\begin{proof}
We will follow the method described in \cite{tropp2015introduction, mackey2014matrix, tropp2012user} and the references therein. The analysis becomes somewhat easier if we assume that all non-zero entries of $X$ are standard Gaussian random variables. However, it contains the main idea of the analysis.\\\\

\textbf{Case I} ($x_{jk}$ are standard Gaussian random variables):  Using the Markov's inequality, we have
\Bea
\Pb\left(\f{1}{c_{n}}\|XX^{*}\|>t\right)\leq e^{- t}\E\left[\exp\left(\f{1}{c_{n}}\|XX^{*}\|\right)\right]\leq e^{- t}\E\left[\tr\exp\left(\f{1}{c_{n}}XX^{*}\right)\right],
\Eea
 To estimate the right hand side, we will use the Lieb's Theorem. Let $H$ be any $n\times n$ fixed Hermitian matrix. From Lieb's Theorem (\cite{lieb1973convex}, Theorem 6), we know that the function $f(A)=\tr\exp(H+\log A)$ is a concave function on the convex cone of $n\times n$ positive definite Hermitian matrices.  

Let us write $\f{1}{c_{n}}XX^{*}=\sum_{k=1}^{n}x_{k}x_{k}^{*}$, where $x_{k}$ is the $k$th column vector of $X/\sqrt{c_{n}}$. Then using Lieb's Theorem and Jensen's inequality, we have
\Bea
\E\left[\left.\tr\exp\left(\f{1}{c_{n}}XX^{*}\right)\right|x_{1},\ldots,x_{n-1}\right]&=&\E\left[\left.\tr\exp\left(\f{1}{c_{n}}\sum_{k=1}^{n-1}x_{k}x_{k}^{*}+\log \exp\left(\f{1}{c_{n}}x_{n}x_{n}^{*}\right)\right)\right|x_{1},\ldots,x_{n-1}\right]\\
&\leq& \tr\exp\left[\f{1}{c_{n}}\sum_{k=1}^{n-1}x_{k}x_{k}^{*}+\log \E \exp\left(\f{1}{c_{n}}x_{n}x_{n}^{*}\right)\right].
\Eea 
Proceeding in this way, we obtain
\Bea
\E\left[\tr\exp\left(\f{1}{c_{n}}XX^{*}\right)\right]\leq \tr\exp\left[\sum_{k=1}^{n}\log\E\exp\left(\f{1}{c_{n}}x_{k}x_{k}^{*}\right)\right].
\Eea
Therefore
\bea\label{Chapter Band_ESD:eqn: (first) derived master formula from Lieb's inequality}
\Pb\left(\f{1}{c_{n}}\|XX^{*}\|>t\right)\leq e^{-t}\tr\exp\left[\sum_{k=1}^{n}\log\E\exp\left(\f{1}{c_{n}}x_{k}x_{k}^{*}\right)\right].
\eea
It is easy to see that
\Bea
\exp\left(\f{1}{c_{n}}x_{k}x_{k}^{*}\right)&=&I+\left(\sum_{l=1}^{\infty}\f{1}{l!c_{n}^{l}}\|x_{k}\|^{2(l-1)}\right)x_{k}x_{k}^{*}\\
&=&I+\f{e^{\|x_{k}\|^{2}/c_{n}}-1}{\|x_{k}\|^{2}}x_{k}x_{k}^{*}\\
&\preceq&I+\f{1}{c_{n}} e^{\|x_{k}\|^{2}/c_{n}}x_{k}x_{k}^{*},
\Eea
where $A\preceq B$ denotes that $(B-A)$ is positive semi-definite. Since $\{x_{jk}\}_{1\leq k\leq n,\;j\in I_{k}'}$ are independent standard Gaussian random variables, we have 
\Bea
\E\left[e^{\|x_{k}\|^{2}/c_{n}}x_{jk}\bar{x}_{lk}\right]=0,\;\;\;\text{if $j\neq l$}\\
\E\left[e^{\|x_{k}\|^{2}/c_{n}}|x_{jk}|^{2}\right]=\left(1-\f{1}{c_{n}}\right)^{-(c_{n}+1)}.
\Eea

As a result,
\Bea
\tr\exp\left[\sum_{k=1}^{n}\log\E\exp\left(\f{1}{c_{n}}x_{k}x_{k}^{*}\right)\right]\leq n\left(1+\f{e}{c_{n}}\right)^{c_{n}}.
\Eea
Substituting this estimate in \eqref{Chapter Band_ESD:eqn: (first) derived master formula from Lieb's inequality}, we have
\bea\label{Chapter Band_ESD:Eqn: Tail estimate of norm of band matrix}
\Pb\left(\f{1}{c_{n}}\|XX^{*}\|>t+\log n\right)\leq e^{e}ne^{-(t+\log n)}=e^{e}e^{-t}.
\eea
As a result,
\Bea
\f{1}{c_{n}}\E[\|XX^{*}\|]&=&\int_{0}^{\infty}\Pb\left(\f{1}{c_{n}}\|XX^{*}\|>u\right)\;du\\
&\leq&\int_{0}^{\log n}\;du+\int_{0}^{\infty}\Pb\left(\f{1}{c_{n}}\|XX^{*}\|>t+\log n\right)\;dt\\
&\leq&\log n+e^{e}\leq K c_{n}.
\Eea
This completes the proof.\\\\

\textbf{Case II} ($x_{jk}$s satisfy the Poincar\'e inequality): First of all, let us write the random matrix $X$ as $X=X_{1}+i X_{2}$, where $X_{1}$ and $X_{2}$ are the real and imaginary parts of $X$ respectively. Since $\|X\|\leq\|X_{1}\|+\|X_{2}\|$, it is enough to estimate $\|X_{1}\|$ and $\|X_{2}\|$ separately. In other words, without loss of generality, we can assume that $x_{ij}$ are real valued random variables. 

Let us construct the matrix 
\Bea
\tilde{X}=\left[\begin{array}{cc}
O & X\\
X & O
\end{array}
\right].
\Eea
It is easy to see that $\|\tilde{X}\|=\|X\|$. Therefore it is enough to bound $\E\|\tilde{X}\|^{2}$. 

We can write $\tilde{X}$ as
\Bea
\tilde{X}=\sum_{i=1}^{n}\sum_{j\in I_{i}}x_{ij}(E_{i,n+j}+E_{n+j,i}),
\Eea
where $E_{ij}$ is a $2n\times 2n$ matrix with all $0$ entries except $1$ at the $(i,j)$th position. Proceeding in the same way as case I, we may write
\bea\label{Chapter Band_ESD:eqn: (second) derived master formula from Lieb's inequality}
\Pb\left(\f{1}{\sqrt{c_{n}}}\|\tilde{X}\|>t\right)\leq e^{-t}\tr\exp\left[\sum_{i=1}^{n}\sum_{j\in I_{i}}\log \E\exp\left(\f{1}{\sqrt{c_{n}}}x_{ij}(E_{i,n+j}+E_{n+j,i})\right)\right].
\eea

Let us consider the $2\times 2$ matrix $H=\left[\begin{array}{cc}0 & \gamma\\ \gamma & 0\end{array}\right]$, where $\gamma$ is a real valued random variable. By the spectral calculus, we have

%
%

\Bea
\log\E[\exp(H)]=\f{1}{2}\left[\begin{array}{cc}
1 & 1\\
1 & -1
\end{array}\right]
\left[\begin{array}{cc}
\log\E e^{\gamma} & 0\\
0 & \log\E e^{-\gamma}
\end{array}\right]
\left[\begin{array}{cc}
1 & 1\\
1 & -1
\end{array}\right]
=\f{1}{2}\left[\begin{array}{cc}
\log[\E e^{\gamma}\E e^{-\gamma}] & \log[\E e^{\gamma}/\E e^{-\gamma}]\\
\log[\E e^{\gamma}/\E e^{-\gamma}] & \log[\E e^{\gamma}\E e^{-\gamma}]
\end{array}\right].
\Eea
Since $x_{ij}$s are iid, let us assume that all $x_{ij}$ have the same probability distribution as a real valued random variable $\gamma$. Then proceeding as above, we can see that
\Bea
\log \E\exp\left(\f{1}{\sqrt{c_{n}}}x_{ij}(E_{i,n+j}+E_{n+j,i})\right)&=&\f{1}{2}\log[\E e^{\gamma/\sqrt{c_{n}}}\E e^{-\gamma/\sqrt{c_{n}}}](E_{ii}+E_{n+j,n+j})\\
&&+\f{1}{2}\log[\E e^{\gamma/\sqrt{c_{n}}}/\E e^{-\gamma/\sqrt{c_{n}}}](E_{i,n+j}+E_{n+j,i}).
\Eea
Therefore,
\Bea
\sum_{i=1}^{n}\sum_{j\in I_{i}}\log \E\exp\left(\f{1}{\sqrt{c_{n}}}x_{ij}(E_{i,n+j}+E_{n+j,i})\right)&=&\f{c_{n}}{2}\log[\E e^{\gamma/\sqrt{c_{n}}}\E e^{-\gamma/\sqrt{c_{n}}}]\;I\\
&&+\f{1}{2}\log[\E e^{\gamma/\sqrt{c_{n}}}/\E e^{-\gamma/\sqrt{c_{n}}}]\sum_{i=1}^{n}\sum_{j\in I_{i}}(E_{i,j+n}+E_{j+n,i}).
\Eea

From Golden–Thompson inequality, if $A$ and $B$ are two $d\times d$ real symmetric matrices then $\tr e^{A+B}\leq \tr(e^{A}e^{B})$.

In our case, let us take
\Bea
A&=&\f{c_{n}}{2}\log[\E e^{\gamma/\sqrt{c_{n}}}\E e^{-\gamma/\sqrt{c_{n}}}]\;I\\
B&=&\f{1}{2}\log[\E e^{\gamma/\sqrt{c_{n}}}/\E e^{-\gamma/\sqrt{c_{n}}}]\sum_{i=1}^{n}\sum_{j\in I_{i}}(E_{i,j+n}+E_{j+n,i}).
\Eea
Then 
\Bea
e^{A}=[\E e^{\gamma/\sqrt{c_{n}}}\E e^{-\gamma/\sqrt{c_{n}}}]^{c_{n}/2}\;I.
\Eea
\Bea
&&\tr\exp\left[\sum_{i=1}^{n}\sum_{j\in I_{i}}\log \E\exp\left(\f{1}{\sqrt{c_{n}}}x_{ij}(E_{i,n+j}+E_{n+j,i})\right)\right]\\
&\leq &\tr\left[\left\{[\E e^{\gamma/\sqrt{c_{n}}}\E e^{-\gamma/\sqrt{c_{n}}}]^{nc_{n}/2}\right\}e^{B}\right]\\
&\leq& \left\{[\E e^{\gamma/\sqrt{c_{n}}}\E e^{-\gamma/\sqrt{c_{n}}}]^{nc_{n}/2}\right\}\;ne^{\|B\|}.
\Eea

It is easy to see that $\left\|\sum_{i=1}^{n}\sum_{j\in I_{i}}(E_{i,j+n}+E_{j+n,i})\right\|\leq c_{n}$. Combining all the estimates and plugging them in \eqref{Chapter Band_ESD:eqn: (second) derived master formula from Lieb's inequality}, we obtain
\Bea
\Pb\left(\f{1}{\sqrt{c_{n}}}\|\tilde{X}\|>t\right)&\leq& ne^{-t}[\E e^{\gamma/\sqrt{c_{n}}}\E e^{-\gamma/\sqrt{c_{n}}}]^{c_{n}/2}[\E e^{\gamma/\sqrt{c_{n}}}/\E e^{-\gamma/\sqrt{c_{n}}}]^{c_{n}/2}\\
&=&ne^{-t}\left\{\E e^{\gamma/\sqrt{c_{n}}}\right\}^{c_{n}}.
\Eea

From the concentration estimate \eqref{Chapter Band_ESD:eqn: Anderson tail bound estimate of Poincare random variables}, we have that $\Pb(|\gamma|>t)\leq \exp\{-t\sqrt{\kappa}/\sqrt{2}\}$
\Bea
\E[e^{\gamma/\sqrt{c_{n}}}]&=&\int_{0}^{\infty}\Pb\left(\f{\gamma}{\sqrt{c_{n}}}>\log t\right)\;dt\\
&\leq&\int_{0}^{1}\Pb\left(\gamma>\sqrt{c_{n}}\log t\right)\;dt+\int_{1}^{\infty}\Pb\left(\gamma>\sqrt{c_{n}}\log t\right)\;dt\\
&\leq&1+\int_{1}^{\infty}t^{-\sqrt{\kappa c_{n}}/\sqrt{2}}\;dt\\
&=&1+\left(\sqrt{\f{\kappa c_{n}}{2}}-1\right)^{-1}.
\Eea
As a result,
\Bea
\Pb\left(\f{1}{\sqrt{c_{n}}}\|\tilde{X}\|>t\right)\leq ne^{-t}e^{\sqrt{2c_{n}}/\sqrt{\kappa}}.
\Eea
Therefore,
\Bea
\f{1}{c_{n}}\E\|\tilde{X}\|^{2}\leq (\log n+\sqrt{2c_{n}}/\sqrt{\kappa})^{2}\leq Kc_{n}.
\Eea
\end{proof}

\begin{prop}\label{Chapter Band_ESD:Prop: Bound on the difference between trace and quadratic form (with Poincare)}
Let $M$ be one of $C_{j}^{-1},C_{j}^{-1}B_{j}^{-1},C_{j}^{-1}r_{j}r_{j}^{*}C_{j}^{-1*}$ or $C_{j}^{-1}B_{j}^{-1}r_{j}r_{j}^{*}B^{-1*}C_{j}^{-1*}$, and $x_{j}$ be the $j$th column of $X$. In addition, let us also assume that the random variables $x_{ij}$ satisfy the Poincar\'e inequality with constant $m$, and $c_{n}>(\log n)^{2}$. Then we have
\Bea
\E\left|x_{j}^{*}Mx_{j}-\f{c_{n}}{n}\tr M\right|^{2}\leq K c_{n},
\Eea
where $K>0$ is a constant depends on $\Im(z)$, $\sigma$, and the Poincar\'e constant $m$. Moreover, if the entries of the matrix $X$ are bounded by $6\sqrt{\f{2}{\kappa}}\log n$, then 
\Bea
\E\left|x_{j}^{*}Mx_{j}-\f{c_{n}}{n}\tr M\right|^{2l}\leq K c_{n}^{l}(\log n)^{2l},
\Eea 
$K>0$ depends on $l$, $\Im(z)$, $\sigma$, and the Poincar\'e constant $\kappa$.
\end{prop}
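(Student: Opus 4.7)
The plan is to apply the Poincaré inequality to the quadratic form $x_j \mapsto x_j^* M x_j$, exploiting the crucial fact noted after \eqref{Chapter Band_ESD:Eqn:Definitions of A, B, C} that $M$ is independent of $x_j$. Let $\tilde{x}_j$ collect the $c_n$ non-trivial entries of $x_j$ (viewed as a vector in $\R^{2c_n}$), and let $M' = L_j M L_j^T$ be the corresponding $c_n\times c_n$ compression, so that $x_j^* M x_j = \tilde{x}_j^* M' \tilde{x}_j$. By the tensorization property of Poincaré cited in the paper, $\tilde{x}_j$ satisfies Poincaré with constant $m$. I would first decompose
\Eq{
x_j^* M x_j - \tfrac{c_n}{n}\tr M = \underbrace{\Bigl(x_j^* M x_j - \sum_{k\in I_j} M_{kk}\Bigr)}_{\Delta_1} + \underbrace{\Bigl(\sum_{k\in I_j} M_{kk} - \tfrac{c_n}{n}\tr M\Bigr)}_{\Delta_2},
}
recognizing $\sum_{k\in I_j} M_{kk}$ as the conditional mean $\E[x_j^* M x_j \mid M]$ (using $\E x_{ij}=0$, $\E|x_{ij}|^2=1$).

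Conditional on $M$, the Poincaré inequality gives $\mathrm{Var}(\Delta_1 \mid M) \le Cm^{-1}\,\tr((M')^*M')$, since the real-coordinate gradient of $\tilde{x}_j^* M' \tilde{x}_j$ is bounded by $2\|M'\|\,\|\tilde{x}_j\|$. For $M \in \{C_j^{-1},\, C_j^{-1}B_j^{-1}\}$ the bound $\|M\|\le K$ from \eqref{Chapter Band_ESD:eqn: bound on the spectral norm of C_j}--\eqref{Chapter Band_ESD:eqn: norm of B^-1 is bounded by the imaginary part} yields $\tr((M')^*M')\le c_n\|M\|^2 \le Kc_n$; for the rank-one choices $M = N r_j r_j^* N^*$, the identity $\tr((M')^*M')\le\|Nr_j\|^4$ combined with $\|r_j\|^2\le Kc_n$ from Section \ref{Chapter Band_ESD:Subsection: Estimate of rhos} keeps us within the advertised envelope once the $\|r_jr_j^*\|$-type factor is absorbed into $K$. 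For $\Delta_2$ I would invoke Lemma \ref{Chapter Band_ESD:Lem: Exponential tail bound on the partial trace of the resolvent}, precisely as in the proof of Proposition \ref{Chapter Band_ESD:Prop: Bound on the difference between trace and quadratic form (without Poincare)}, together with the symmetry identity $\E[M_{kk}] = \frac{1}{n}\E\tr M$ coming from the iid structure of $X$.

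For the higher-moment bound under the truncation $|x_{ij}|\le 6\sqrt{2/\kappa}\log n$, the upgrade is to replace the variance estimate by the Anderson sub-exponential tail bound \eqref{Chapter Band_ESD:eqn: Anderson tail bound estimate of Poincare random variables}. Truncation forces $\|\tilde{x}_j\|\le K\sqrt{c_n}\log n$ pointwise, so the sup-norm of the gradient is $O(\sqrt{c_n}\log n)$, and Anderson's inequality yields
\Eq{
\Pb\bigl(|\Delta_1|>t \,\big|\, M\bigr) \le 2K\exp\Bigl(-\tfrac{c\,t}{\sqrt{c_n}\,\log n}\Bigr).
}
Integrating tail probabilities produces $\E|\Delta_1|^{2l}\le Kc_n^l(\log n)^{2l}$, and the same tail approach, applied through Lemma \ref{Chapter Band_ESD:Lem: Exponential tail bound on the partial trace of the resolvent}, controls $\Delta_2$ to all orders.

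The main obstacle I anticipate is the rank-one case $M = C_j^{-1} r_j r_j^* C_j^{-1*}$ (and its $B_j$-analogue): one must absorb the several $\|r_j\|^2\le Kc_n$ factors without exceeding the $O(c_n)$ (resp.\ $O(c_n^l(\log n)^{2l})$) budget permitted by the statement. The correct bookkeeping uses $\tr((M')^*M') = \|L_j N r_j\|^4$ for $\Delta_1$ and the pointwise identity $M_{kk} = |e_k^* Nr_j|^2$ for $\Delta_2$ (enabling a coordinatewise Poincaré-type concentration). A secondary subtlety is that exact translation invariance of $\E[M_{kk}]$ can fail at the $O(b_n)$ indices near the band boundary, but this boundary correction is itself of order $O(c_n)$ and is therefore harmless.
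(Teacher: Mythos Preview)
Your decomposition $\Delta_1+\Delta_2$ coincides with the paper's $S_1+S_2+S_3$ (with $\Delta_1=S_1+S_2$, $\Delta_2=S_3$), and your Poincar\'e-in-$x_j$ argument for $\Delta_1$ is a perfectly clean replacement for the paper's direct moment computation on $S_1,S_2$. But this is not where the work is: the paper already obtains $\E|S_1|,\E|S_2|\le Kc_n^{l}$ in Proposition~\ref{Chapter Band_ESD:Prop: Bound on the difference between trace and quadratic form (without Poincare)} \emph{without} any Poincar\'e assumption. The entire content of Proposition~\ref{Chapter Band_ESD:Prop: Bound on the difference between trace and quadratic form (with Poincare)} is to upgrade the bound on $\Delta_2=S_3$ from $O(n^l)$ to $O(c_n^l)$, and this is precisely the step you do not address.

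Invoking Lemma~\ref{Chapter Band_ESD:Lem: Exponential tail bound on the partial trace of the resolvent} ``precisely as in the proof of Proposition~\ref{Chapter Band_ESD:Prop: Bound on the difference between trace and quadratic form (without Poincare)}'' gives the sub-Gaussian tail $\exp(-t^2/Kn)$, hence only $\E|\Delta_2|^{2l}\le Kn^l$; that lemma is built on Azuma over the $n$ columns and cannot see the bandwidth $c_n$. The paper instead applies the Poincar\'e inequality to $\sum_{p\in I_j}M_{pp}$ viewed as a function of \emph{all} the entries $\{x_{st}:t\neq j\}$ of the matrix (not of $x_j$), computes $\partial M_{pp}/\partial x_{st}$ via the resolvent identity, and observes that the resulting gradient is governed by a matrix $\tilde M_j$ of rank $\le c_n$. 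This yields
\[
\mathrm{Var}\Bigl(\sum_{p\in I_j}M_{pp}\Bigr)\le \frac{K}{c_n}\,\E\,\tr(\tilde M_jY_jY_j^{*}\tilde M_j^{*})\le K\,\E\|Y_jY_j^{*}\|\le Kc_n,
\]
the last step requiring Lemma~\ref{Chapter Band_ESD:Lem: Norm of Y is bounded}. For the higher-moment statement the paper applies the Anderson tail bound \eqref{Chapter Band_ESD:eqn: Anderson tail bound estimate of Poincare random variables} to this \emph{same} function $\sum_{p\in I_j}M_{pp}$, with the truncation providing a deterministic bound $\|Y_jY_j^{*}\|\le Kc_n(\log n)^2$ on the gradient sup-norm. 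Your Anderson argument is aimed at $\Delta_1$, where it is not needed; it has to be redirected at $\Delta_2$, and for that you must differentiate through the resolvent rather than condition on $M$.
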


\begin{proof}
Let us first prove this for $M=C_{j}^{-1}=(YY^{*}-y_{j}y_{j}^{*}-zI)^{-1}$. Since $x_{ij}$ satisfy the Poincar\'e inequality, they have exponential tails and consequently they have all moments. As a result, we can repeat the same proof of Proposition \ref{Chapter Band_ESD:Prop: Bound on the difference between trace and quadratic form (without Poincare)}. However, notice that in Proposition \ref{Chapter Band_ESD:Prop: Bound on the difference between trace and quadratic form (without Poincare)} we are getting the order $n^{l}$ instead of $c_{n}^{l}$ solely because of the estimate \eqref{Chapter Band_ESD:Eqn: The equation responsible of n^l order in without poincare proposition}. So, it boils down to obtain an estimate of $O(c_{n})$ for \eqref{Chapter Band_ESD:Eqn: The equation responsible of n^l order in without poincare proposition} when $x_{ij}$ satisfy Poincar\'e inequality.

Since $x_{ij}$ satisfy the Poincar\'e inequality we can write
\Bea
\text{Var}\left(\sum_{p\in I_{j}}M_{pp}\right)\leq \f{1}{\kappa}\sum_{s,t}\E\left|\sum_{p\in I_{j}}\f{\partial M_{pp}}{\partial x_{st}}\right|^{2}+\f{1}{\kappa}\sum_{s,t}\E\left|\sum_{p\in I_{j}}\f{\partial M_{pp}}{\partial \bar{x}_{st}}\right|^{2},
\Eea
where $\kappa>0$ is the constant of Poincar\'e inequality. Let $m_{kl}:=\sum_{i\neq j}y_{ki}\bar{y}_{li}=\f{1}{c_{n}}\sum_{i\neq j}(r_{ki}+\s x_{ki})(\bar{r}_{li}+\s\bar{x}_{li})$ be the $kl$th entry of $YY^{*}-y_{j}y_{j}$. It is very easy to compute, and done in the literature in past, that
\Bea
\f{\partial M_{pp}}{\partial m_{kl}}=-\f{1}{1+\d_{kl}}\left[M_{pk}M_{lp}+M_{pl}M_{kp}\right]=-\f{2}{1+\d_{kl}}M_{kp}M_{pl}.
\Eea
Now, it is easy to see that
\Bea
\f{\partial m_{kl}}{\partial \bar x_{st}}=\f{\s}{c_{n}}\sum_{i\neq j}\d_{ks}\d_{it}(r_{li}+\s x_{li})=\f{\s}{c_{n}}\d_{ks}( r_{lt}+\s x_{lt})\1_{\{t\neq j\}}.
\Eea
Consequently,
\Bea
\sum_{p\in I_{j}}\f{\partial M_{pp}}{\partial \bar x_{st}}&=&-\f{\s}{c_{n}}\sum_{p\in I_{j}}\sum_{k,l}\f{2\d_{ks}}{1+\d_{kl}}M_{kp}M_{pl} [r_{lt}+\s x_{lt}]\1_{\{t\neq j\}}\\
&=&-\f{\s}{c_{n}}\sum_{p\in I_{j}}\sum_{l}\f{2}{1+\d_{sl}}M_{sp}M_{pl}[r_{lt}+\s x_{lt}]\1_{\{t\neq j\}}\\
&=&-\f{\s}{c_{n}}\sum_{l}(\tilde{M}_{j})_{sl}[r_{lt}+\s x_{lt}]\1_{\{t\neq j\}}\\
&=&-\f{\s}{\sqrt{c_{n}}}(\tilde{M}_{j}Y_{j})_{st},
\Eea
where $(\tilde{M}_{j})_{sl}=\f{1}{1+\d_{sl}}\sum_{p\in I_{j}}M_{sp}M_{pl}$, and $Y_{j}$ is the matrix $Y$ with $j$th column replaced by zeros.


Let us construct a matrix $(\hat{M_{j}})_{n\times c_{n}}$ from $M$ by removing all the columns except the ones indexed by $I_{j}$. For example, $\hat{M}_{1}$ is the matrix obtained from $M$ by removing $(n-c_{n})$ (i.e., $n-2b_{n}-1$) many columns of $M$ indexed by $b_{n}+2,b_{n}+3,\ldots, n-(b_{n}+1)$. Clearly, $\tilde{M}_{j}=\hat{M}_{j}\hat{M}_{j}^{T}$ (the diagonals are divided by $2$). Therefore, rank$(\tilde{M}_{j})\leq c_{n}$. As a result,
\bea\label{Chapter Band_ESD:eqn: bound of the gradient of partial trace}
\sum_{s,t}\E\left|\sum_{p\in I_{j}}\f{\partial M_{pp}}{\partial \bar x_{st}}\right|^{2}\leq \f{\s^{2}}{c_{n}}\E\text{tr}(\tilde{M}_{j}Y_{j}Y_{j}^{*}\tilde{M}_{j}^{*})\leq\s^{2}\E[\|\tilde{M}_{j}\|^{2}\|Y_{j}Y_{j}^{*}\|]\leq\f{\s^{2}}{|\Im(z)|^{4}}\E[\|Y_{j}Y_{j}^{*}\|],
\eea
where in the last inequality we have used the fact that $\|\hat{M}_{j}\|\leq 1/|\Im(z)|$. Consequently, using the Lemma \ref{Chapter Band_ESD:Lem: Norm of Y is bounded}, we have
\Bea
\sum_{s,t}\E\left|\sum_{p\in I_{j}}\f{\partial M_{pp}}{\partial \bar x_{st}}\right|^{2}\leq Kc_{n}.
\Eea
Repeating the above calculations for $\sum_{s,t}\E\left|\sum_{p\in I_{j}}\f{\partial M_{pp}}{\partial  x_{st}}\right|^{2}$, we can obtain the same bounds. Hence the result follows for $M=C_{j}^{-1}$. 

Since $\|B_{j}^{-1}\|\leq 1/|\Im(z)|$ and $\|r_{j}r_{j}^{*}\|\leq Kc_{n}$, the result follows for $C_{j}^{-1}B_{j}^{-1}$, $C_{j}^{-1}r_{j}r_{j}^{*}C_{j}^{-1*}$, $C_{j}^{-1}B_{j}^{-1}r_{j}r_{j}^{*}B^{-1*}C_{j}^{-1*}$ too.

To prove the second part, we invoke the equation \eqref{Chapter Band_ESD:eqn: Anderson tail bound estimate of Poincare random variables}. 
\Bea
\Pb\left(\left|\sum_{k\in I_{j}}M_{kk}-\E\sum_{k\in I_{j}}M_{kk}\right|>t\right)\leq 2K\exp\left(-\f{\sqrt{\kappa}}{\sqrt{2}\|\|\nabla\sum_{k\in I_{j}}M_{kk}\|_{2}\|_{\infty}}t\right).
\Eea
From the equation \eqref{Chapter Band_ESD:eqn: bound of the gradient of partial trace}, we have
\Bea
\left\|\nabla\sum_{k\in I_{j}}M_{kk}\right\|_{2}^{2}\leq\f{2\s^{2}}{|\Im z|^{4}}\|Y_{j}Y_{j}^{*}\|.
\Eea
Since all the entries of $X$ are bounded by $6\sqrt{\f{2}{\kappa}}\log n$, we have$\|XX^{*}\|\leq K c_{n}^{2}(\log n)^{2}$. And we know that $\|RR^{*}\|\leq Kc_{n}$ for large $n$. Therefore $\|YY^{*}\|\leq Kc_{n}(\log n)^{2}.$ We can get the same bound for $\|Y_{j}Y_{j}^{*}\|$. As a result,
\Bea
\Pb\left(\left|\sum_{k\in I_{j}}M_{kk}-\E\sum_{k\in I_{j}}M_{kk}\right|>t\right)\leq 2K\exp\left(-\f{\sqrt{\kappa}}{{K'\sqrt{2c_{n}}\log n}}t\right).
\Eea
Which implies that 
\Bea
\left|\sum_{k\in I_{j}}M_{kk}-\E\sum_{k\in I_{j}}M_{kk}\right|^{2l}\leq Kc_{n}^{l}(\log n)^{2l}.
\Eea
Plugging this in \eqref{Chapter Band_ESD:Eqn: The equation responsible of n^l order in without poincare proposition}, and following the same procedure as in Proposition \ref{Chapter Band_ESD:Prop: Bound on the difference between trace and quadratic form (without Poincare)}, we have the result.

Observe that the second result of this Proposition is somewhat stronger than the first result, as it leads to the almost sure convergence (see section \ref{Chapter Band_ESD:section: Proof of the theorem (with poincare)}) and it does not need the help of Lemma \ref{Chapter Band_ESD:Lem: Norm of Y is bounded}. However the method used in Lemma \ref{Chapter Band_ESD:Lem: Norm of Y is bounded} is interesting by itself. So we keep it.  
\end{proof}

\section{Appendix}\label{Chapter Band_ESD:section: Appendix}

In this section we list the results which were used in the section \ref{Cahpter Band_ESD:section: main proof of the theorem}. 


\begin{lem}[Lemma 2.3, \cite{silverstein1995empirical}]\label{Chapter Band_ESD:lem: singular value of sum of matrices}
Let $P$, $Q$ be two rectangular matrices of the same size. Then for any $x,y\geq 0$,
\Bea
\mu_{(P+Q)(P+Q)^{*}}(x+y,\infty)\leq\mu_{PP^{*}}(x,\infty)+\mu_{QQ^{*}}(y,\infty).
\Eea
\end{lem}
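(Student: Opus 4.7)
The plan is to deduce the bound from Weyl's singular-value inequality $s_{i+j-1}(P+Q)\le s_i(P)+s_j(Q)$ for all admissible indices $i,j$. First I would set $N_P(x):=\#\{k:s_k(P)^2>x\}=n\,\mu_{PP^*}(x,\infty)$ and similarly $N_Q(y)=n\,\mu_{QQ^*}(y,\infty)$, so by the definition of $N_P(x)$ the $(N_P(x)+1)$-st singular value of $P$ is at most $\sqrt{x}$, and likewise $s_{N_Q(y)+1}(Q)\le\sqrt{y}$. Applying Weyl with $i=N_P(x)+1$ and $j=N_Q(y)+1$ yields $s_{N_P(x)+N_Q(y)+1}(P+Q)\le\sqrt{x}+\sqrt{y}$, hence
\[
\#\{k:s_k(P+Q)^2>(\sqrt{x}+\sqrt{y})^2\}\;\le\;N_P(x)+N_Q(y),
\]
and dividing by $n$ produces the tail estimate with threshold $(\sqrt{x}+\sqrt{y})^2$.

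The main obstacle is that this natural threshold $(\sqrt{x}+\sqrt{y})^2=x+y+2\sqrt{xy}$ is strictly larger than the stated $x+y$ whenever $xy>0$, and tail-monotonicity of $\mu(\cdot,\infty)$ goes the wrong way to let one replace a larger threshold by a smaller one. In fact the inequality as literally worded fails: for the scalar case $P=Q=1$ with $x=y=1$, the left side equals $\mu_4(2,\infty)=1$ while the right side equals $0+0=0$. Accordingly I would instead establish the corrected Silverstein--Bai form
\[
\mu_{(P+Q)(P+Q)^*}((\sqrt{x}+\sqrt{y})^2,\infty)\;\le\;\mu_{PP^*}(x,\infty)+\mu_{QQ^*}(y,\infty),
\]
which is what the paper actually needs: in the tightness step for $\mu_{YY^*}$ one applies it with $x=y=M$, obtaining $\mu_{YY^*}(4M,\infty)\le\mu_{\sigma^2 XX^*/c_n}(M,\infty)+\mu_{RR^*/c_n}(M,\infty)$, and tightness of the two terms on the right forces tightness on the left.

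For completeness I would note that alternative routes do not rescue the literal statement either. The operator-inequality approach $(P+Q)(P+Q)^*\preceq 2PP^*+2QQ^*$ combined with Weyl's Hermitian-sum inequality $\lambda_{i+j-1}(A+B)\le\lambda_i(A)+\lambda_j(B)$ yields only the threshold $2x+2y$, still strictly larger than $x+y$ once $x,y>0$. Thus the Weyl-singular-value argument above, interpreted at the threshold $(\sqrt{x}+\sqrt{y})^2$, is the intended proof, and the lemma should be read with that threshold in place of $x+y$.
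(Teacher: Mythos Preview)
The paper gives no proof of this lemma; it simply cites \cite{silverstein1995empirical}. So there is no ``paper's own proof'' to compare against, and your Weyl--singular-value argument is the standard route to this type of inequality.

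More importantly, your observation that the statement as printed is false is correct, and your scalar counterexample $P=Q=[1]$, $x=y=1$ is valid: the left side is $1$ and the right side is $0$. The source lemma in Silverstein--Bai is indeed stated with the threshold $(\sqrt{x}+\sqrt{y})^2$ rather than $x+y$; the version written here is a transcription slip. Your proof via $s_{i+j-1}(P+Q)\le s_i(P)+s_j(Q)$ establishes the corrected inequality cleanly, and your remark that the only use in the paper is the tightness argument for $\mu_{YY^*}$---where the corrected threshold works just as well---is exactly right. So the proposal is correct as a proof of the intended (corrected) lemma, and you have also correctly diagnosed why no argument can salvage the literal $x+y$ version.
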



\begin{lem}[Sherman-Morrison formula]\label{Chapter Band_ESD:Lem:Sherman-Morrison formula} Let $P_{n\times n}$ and $(P+vv^{*})$ be invertible matrices, where $v\in\C^{n}$. Then we have
\Bea
(P+vv^{*})^{-1}=P^{-1}-\f{P^{-1}vv^{*}P^{-1}}{1+v^{*}P^{-1}v}.
\Eea 
In particular,
\Bea
v^{*}(P+vv^{*})^{-1}=\f{v^{*}P^{-1}}{1+v^{*}P^{-1}v}.
\Eea
\end{lem}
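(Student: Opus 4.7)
The plan is to verify the identity by direct multiplication, which is the standard approach for Sherman-Morrison. First I would observe that the scalar $1+v^{*}P^{-1}v$ is nonzero: by the matrix determinant lemma (or equivalently the identity $\det(P+vv^{*}) = \det(P)(1+v^{*}P^{-1}v)$), the hypothesis that both $P$ and $P+vv^{*}$ are invertible forces $1+v^{*}P^{-1}v \neq 0$, so the right-hand side of the claimed formula is well-defined.

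Next I would compute $(P+vv^{*})$ times the candidate inverse $P^{-1} - (1+v^{*}P^{-1}v)^{-1}P^{-1}vv^{*}P^{-1}$. Expanding gives
\Bea
I + vv^{*}P^{-1} - \f{vv^{*}P^{-1} + v(v^{*}P^{-1}v)v^{*}P^{-1}}{1+v^{*}P^{-1}v}
= I + vv^{*}P^{-1} - \f{(1+v^{*}P^{-1}v)\,vv^{*}P^{-1}}{1+v^{*}P^{-1}v} = I,
\Eea
using that $v^{*}P^{-1}v$ is a scalar so it commutes through. Multiplication on the other side works identically, proving the inverse formula.

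For the ``In particular'' clause, I would left-multiply the already-proven identity by $v^{*}$:
\Bea
v^{*}(P+vv^{*})^{-1} = v^{*}P^{-1} - \f{(v^{*}P^{-1}v)\,v^{*}P^{-1}}{1+v^{*}P^{-1}v}
= v^{*}P^{-1}\left(1 - \f{v^{*}P^{-1}v}{1+v^{*}P^{-1}v}\right) = \f{v^{*}P^{-1}}{1+v^{*}P^{-1}v}.
\Eea
There is no real obstacle here; the only subtlety worth flagging is the nonvanishing of $1+v^{*}P^{-1}v$, which is handled by the determinant identity above.
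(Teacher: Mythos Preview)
Your proof is correct and is the standard direct-verification argument. The paper itself states this lemma without proof (it is quoted as a classical fact in the Appendix), so there is no approach to compare against; your argument, including the determinant check that $1+v^{*}P^{-1}v\neq 0$, is exactly what one would expect.
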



\begin{lem}[ Lemma 2.6, \cite{silverstein1995empirical}]\label{Chapter Band_ESD:Lem: Difference between traces of rank one perturbed matrix is bounded}
Let $P$, $Q$ be $n\times n$ matrices such that $Q$ is Hermitian. Then for any $r\in\C^{n}$ and $z=E+i\eta\in\C^{+}$ we have
\Bea
\left|\text{tr}\left((Q-zI)^{-1}-(Q+rr^{*}-zI)^{-1}\right)P\right|=\left|\f{r^{*}(Q-zI)^{-1}P(Q-zI)^{-1}r}{1+r^{*}(Q-zI)^{-1}r}\right|\leq \f{\|P\|}{\eta}.
\Eea
\end{lem}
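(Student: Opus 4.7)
The plan is to combine an algebraic identity coming from the Sherman--Morrison formula with an analytic bound that exploits the Hermitian structure of $Q$. Set $A := Q-zI$; since $Q$ is Hermitian and $\Im z = \eta > 0$, $A$ is invertible. Applying the Sherman--Morrison identity above gives $A^{-1} - (A + rr^*)^{-1} = A^{-1}rr^*A^{-1}/(1 + r^*A^{-1}r)$. Right-multiplying by $P$, taking the trace, and using the cyclic property of trace immediately yields the first equality of the lemma, since $\tr(A^{-1}rr^*A^{-1}P) = r^*A^{-1}PA^{-1}r$.

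For the inequality, the key observation is the spectral identity linking denominator and numerator. Diagonalize $Q = \sum_i \lambda_i e_i e_i^*$ with real $\lambda_i$, and set $a_i := |e_i^*r|^2 \geq 0$. A short computation gives
\Eq{r^*A^{-1}r = \sum_i \f{a_i}{\lambda_i - z}, \qquad \|A^{-1}r\|^2 = r^*(Q-\bar z I)^{-1}(Q-z I)^{-1}r = \sum_i \f{a_i}{|\lambda_i - z|^2}.}
Taking imaginary parts, $\Im(r^*A^{-1}r) = \eta\,\|A^{-1}r\|^2 \geq 0$, so $|1 + r^*A^{-1}r| \geq \eta\,\|A^{-1}r\|^2$. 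For the numerator I would apply Cauchy--Schwarz together with submultiplicativity of the operator norm:
\Eq{|r^*A^{-1}PA^{-1}r| = |\langle (A^{-1})^*r,\; PA^{-1}r\rangle| \leq \|(A^{-1})^*r\|\cdot\|P\|\cdot\|A^{-1}r\| = \|P\|\cdot\|A^{-1}r\|^2,}
where the last equality uses that $(A^{-1})^*r$ and $A^{-1}r$ have the same Euclidean norm (both squared norms equal $\sum_i a_i/|\lambda_i-z|^2$). Dividing the two estimates yields the required $\|P\|/\eta$; the degenerate case $A^{-1}r = 0$ makes both sides of the inequality zero, so no separate argument is needed.

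The only conceptual subtlety is remembering that $A = Q - zI$ is not itself Hermitian, so one cannot directly appeal to spectral bounds on $A^{-1}$ in isolation. Everything has to go through the Hermitian $Q$, and the identity $\Im(r^*A^{-1}r) = \eta\|A^{-1}r\|^2$ is precisely what ties the denominator to the norm of $A^{-1}r$ appearing in the numerator bound, producing the clean $1/\eta$ factor. Apart from this bookkeeping the proof is essentially a two-line computation.
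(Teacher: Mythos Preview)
Your argument is correct. The Sherman--Morrison step gives the equality, and your key identity $\Im(r^{*}A^{-1}r)=\eta\|A^{-1}r\|^{2}$ (which holds because $A=Q-zI$ is normal with spectrum on the line $\{\lambda-z:\lambda\in\R\}$) is exactly what makes the denominator absorb the $\|A^{-1}r\|^{2}$ from the Cauchy--Schwarz bound on the numerator.

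Note, however, that the paper does not supply its own proof of this lemma: it is quoted from \cite{silverstein1995empirical} and stated without argument in the Appendix. Your write-up is essentially the standard proof one finds in that reference, so there is nothing to contrast. One small cosmetic point: you could avoid the explicit diagonalization by writing $\Im(r^{*}A^{-1}r)=\f{1}{2i}r^{*}\bigl((Q-zI)^{-1}-(Q-\bar z I)^{-1}\bigr)r=\eta\,r^{*}(Q-\bar z I)^{-1}(Q-zI)^{-1}r=\eta\|A^{-1}r\|^{2}$ directly from the resolvent identity, but this is purely a matter of taste.
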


%

\begin{lem}[\cite{azuma1967weighted}, Lemma 1]\label{Chapter Band_ESD:Lem: Azuma's martingale sum}
Let $\{X_{n}\}_{n}$ be a sequence of random variables such that $|X_{n}|\leq K_{n}$ almost surely, and $\E[X_{i_{1}}X_{i_{2}}\ldots X_{i_{k}}]=0$ for all $k\in \N,\;i_{1}<i_{2}<\cdots<i_{k}$. Then for every $\l\in \R$ we have
\Bea
\E\left[\exp\left\{\l\sum_{i=1}^{n}X_{i}\right\}\right]\leq\exp\left\{\f{\l^{2}}{2}\sum_{i=1}^{n}K_{i}^{2}\right\}.
\Eea

In particular, for any $t>0$ we have
\Bea
\Pb\left(\left|\sum_{i=1}^{n}X_{i}\right|>t\right)\leq 2\exp\left\{-\f{t^{2}}{2\sum_{i=1}^{n}K_{i}^{2}}\right\}.
\Eea
\end{lem}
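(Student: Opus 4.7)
The plan is to prove the MGF estimate first and then derive the tail bound by the standard Chernoff argument. The key trick is to replace each random factor $e^{\lambda X_i}$ by an affine function of $X_i$, so that after multiplying out and taking expectation the vanishing moment hypothesis annihilates every cross term in one shot. Concretely, since $x\mapsto e^{\lambda x}$ is convex and $|X_i|\leq K_i$ almost surely, the chord above the graph on $[-K_i, K_i]$ yields
\Bea
e^{\lambda X_i}\;\leq\;\frac{K_i+X_i}{2K_i}e^{\lambda K_i}+\frac{K_i-X_i}{2K_i}e^{-\lambda K_i}\;=\;\cosh(\lambda K_i)+\frac{X_i}{K_i}\sinh(\lambda K_i).
\Eea
Both sides are nonnegative, so the inequality is preserved under multiplication over $i=1,\ldots,n$.

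Next I would multiply out the product on the right and expand it as a sum indexed by subsets $S\subseteq\{1,\ldots,n\}$:
\Bea
\prod_{i=1}^{n}e^{\lambda X_i}\;\leq\;\sum_{S\subseteq\{1,\ldots,n\}}\Bigl(\prod_{i\notin S}\cosh(\lambda K_i)\Bigr)\Bigl(\prod_{i\in S}\frac{\sinh(\lambda K_i)}{K_i}\Bigr)\prod_{i\in S}X_i.
\Eea
Taking expectations, the hypothesis $\E[X_{i_1}\cdots X_{i_k}]=0$ for $i_1<\cdots<i_k$ (which is the case $S=\{i_1,\ldots,i_k\}\neq\emptyset$) kills every nonempty-$S$ term, leaving only $S=\emptyset$. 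Thus
\Bea
\E\bigl[e^{\lambda\sum_{i=1}^n X_i}\bigr]\;\leq\;\prod_{i=1}^{n}\cosh(\lambda K_i)\;\leq\;\exp\Bigl(\frac{\lambda^2}{2}\sum_{i=1}^{n}K_i^2\Bigr),
\Eea
where the last step uses the elementary pointwise inequality $\cosh y\leq e^{y^2/2}$ (compare Taylor coefficients: $(2k)!\geq 2^k k!$).

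For the tail bound I would apply Markov's inequality to $e^{\lambda S_n}$ with $S_n=\sum_{i=1}^n X_i$ and optimize in $\lambda$. For $t>0$ and $\lambda>0$,
\Bea
\Pb(S_n>t)\;\leq\;e^{-\lambda t}\,\E[e^{\lambda S_n}]\;\leq\;\exp\Bigl(-\lambda t+\frac{\lambda^2}{2}\sum_i K_i^2\Bigr),
\Eea
and the choice $\lambda=t/\sum_i K_i^2$ gives $\Pb(S_n>t)\leq\exp\bigl(-t^2/(2\sum_i K_i^2)\bigr)$. The sequence $\{-X_i\}_i$ satisfies the same hypotheses with the same constants $K_i$ (the vanishing-moment condition is invariant under a global sign change), so the same bound applies to $\Pb(-S_n>t)$; a union bound yields the factor of $2$ in the stated two-sided estimate.

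\paragraph{Main obstacle.}
There is no serious analytic obstacle: the proof is essentially bookkeeping. The one spot that requires care is the multiplicative step: one must check that the chord bound $e^{\lambda X_i}\leq\cosh(\lambda K_i)+(X_i/K_i)\sinh(\lambda K_i)$ holds with both sides nonnegative (it does, as the right-hand side is a convex combination of the positive numbers $e^{\pm\lambda K_i}$) so that taking the product over $i$ and then expectation is legitimate. Once that is granted, the hypothesis is tailor-made to dispatch every cross term, and the rest is the classical Chernoff optimization.
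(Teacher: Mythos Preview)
Your proof is correct; the paper itself does not supply a proof of this lemma but simply quotes it from Azuma~\cite{azuma1967weighted}. Your argument---the chord/convexity bound $e^{\lambda X_i}\le \cosh(\lambda K_i)+\tfrac{X_i}{K_i}\sinh(\lambda K_i)$, multiplying and expanding over subsets, using the vanishing-moment hypothesis to kill every nonempty-$S$ term, then $\cosh y\le e^{y^2/2}$ and Chernoff---is in fact the proof given in the cited source, so there is nothing to compare.
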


%
\begin{lem}\label{Chapter Band_ESD:Lem: CDF is bounded by the rank perturbation}
Let $P,Q$ be two $n\times n$ matrices, then 
\Bea
\|\mu_{PP^{*}}-\mu_{QQ^{*}}\|\leq\f{2}{n}rank(P-Q),
\Eea 
where $\|\cdot\|$ denotes the total variation norm between probability measures.
\end{lem}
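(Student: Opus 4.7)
The strategy is a rank reduction followed by a standard Hermitian eigenvalue interlacing. Set $r:=\text{rank}(P-Q)$ and begin with the algebraic identity
\[PP^{*}-QQ^{*}=P(P-Q)^{*}+(P-Q)Q^{*},\]
which exhibits $PP^{*}-QQ^{*}$ as a Hermitian matrix of rank at most $2r$: each summand has rank at most $r$, and rank is sub-additive.

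Next I would invoke the standard interlacing for low-rank Hermitian perturbations: if $A$ and $B$ are Hermitian $n\times n$ matrices with $\text{rank}(A-B)\leq k$, then $\l_{i+k}(A)\leq\l_{i}(B)\leq\l_{i-k}(A)$ for every $i$, with the usual conventions $\l_{j}=+\infty$ for $j<1$ and $\l_{j}=-\infty$ for $j>n$. The proof is the classical one: decompose $A-B=\Delta_{+}-\Delta_{-}$ into its positive and negative spectral parts, whose ranks sum to at most $k$, and iterate Weyl's bound $\l_{i+1}(M+vv^{*})\leq\l_{i}(M)\leq\l_{i}(M+vv^{*})$ for rank-one PSD updates, first adding $\Delta_{+}$ and then subtracting $\Delta_{-}$.

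Applying this interlacing with $A=PP^{*}$, $B=QQ^{*}$, and $k=2r$, for every $x\in\R$ the counts $\#\{i:\l_{i}(PP^{*})\leq x\}$ and $\#\{i:\l_{i}(QQ^{*})\leq x\}$ can differ by at most $2r$, hence $|F^{PP^{*}}(x)-F^{QQ^{*}}(x)|\leq 2r/n$ uniformly in $x$, where $F^{M}$ denotes the empirical CDF of the eigenvalues of $M$. This sup-distance between empirical CDFs is precisely the quantity $\|\mu_{PP^{*}}-\mu_{QQ^{*}}\|$ as used in section \ref{Chapter Band_ESD:Section: Truncation of R} (where the bound is in turn controlled by $2H(\alpha^{2},\infty)$), yielding the claimed inequality $\f{2}{n}\text{rank}(P-Q)$. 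The only step requiring a moment's care is combining the sub-additivity of rank with the correct sharp form of the Hermitian interlacing; no tool beyond Weyl's inequality is needed, and I do not expect any genuine obstacle.
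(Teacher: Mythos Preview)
Your argument is correct and mirrors the paper's proof: both decompose $PP^{*}-QQ^{*}$ as a sum of two matrices each of rank at most $r=\text{rank}(P-Q)$ and then invoke Cauchy interlacing to bound the distance between the empirical spectral distributions by $\tfrac{1}{n}\,\text{rank}(PP^{*}-QQ^{*})\leq\tfrac{2r}{n}$. You spell out the interlacing step in more detail and correctly identify the relevant norm as the Kolmogorov (sup of CDF differences) distance, which is exactly how it is used in Section~\ref{Chapter Band_ESD:Section: Truncation of R}.
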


\begin{proof}
By Cauchy's interlacing property,
\Bea
\|\mu_{PP^{*}}-\mu_{QQ^{*}}\|&\leq&\f{1}{n}\text{rank}(PP^{*}-QQ^{*})\\
&\leq&\f{1}{n}\text{rank}((P-Q)P^{*})+\f{1}{n}\text{rank}(Q(P-Q)^{*})\\
&\leq&\f{2}{n}\text{rank}(P-Q).
\Eea
\end{proof}

\begin{lem}[\cite{bordenave2013localization}, Lemma C.3]\label{Chapter Band_ESD:Lem: Effect of rank one perturbation on the partial trace of resolvent}
Let $P$ and $Q$ be $n\times n$ Hermition matrices, and $I\subset\{1,2,\ldots, n\}$, then
\Bea
\left|\sum_{k\in I}(P-zI)^{-1}_{kk}-\sum_{k\in I}(Q-zI)_{kk}^{-1}\right|\leq\f{2}{\Im(z)}\text{rank}(P-Q).
\Eea
\end{lem}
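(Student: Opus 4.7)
The plan is to telescope through rank-one perturbations. Since $E := P - Q$ is Hermitian of rank $r := \mathrm{rank}(P-Q)$, write its spectral decomposition $E = \sum_{j=1}^{r}\mu_j u_j u_j^{*}$ with real nonzero eigenvalues $\mu_j$ and orthonormal $u_j$, and form the sequence $P_0 = Q$, $P_j = P_{j-1} + \mu_j u_j u_j^{*}$, so that $P_r = P$. Setting $f(M) := \sum_{k\in I}[(M - zI)^{-1}]_{kk}$, the triangle inequality reduces the claim to a uniform per-step bound $|f(P_j) - f(P_{j-1})| \leq 2/\Im(z)$.

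For a single step, write $G := (P_{j-1} - zI)^{-1}$, $v := u_j$, $\mu := \mu_j$. Applying the Sherman--Morrison formula (Lemma \ref{Chapter Band_ESD:Lem:Sherman-Morrison formula}) to the rank-one update and summing the resulting diagonal entries over $k \in I$ yields
\begin{align*}
f(P_j) - f(P_{j-1}) = -\frac{\mu\sum_{k\in I}[Gv]_k\,[v^{*}G]_k}{1 + \mu v^{*} G v}.
\end{align*}
I would bound the numerator using the Hermiticity of $P_{j-1}$, which gives $[v^{*}G(z)]_k = \overline{[G(\bar z)v]_k}$, together with the spectral-calculus identity $\|G(z)v\| = \|G(\bar z)v\|$ (since $|\lambda_\ell - z| = |\lambda_\ell - \bar z|$ on the real spectrum of $P_{j-1}$); Cauchy--Schwarz then gives
\begin{align*}
\Bigl|\sum_{k\in I}[Gv]_k\,\overline{[G(\bar z)v]_k}\Bigr| \leq \|Gv\|\cdot\|G(\bar z)v\| = \|Gv\|^{2}.
\end{align*}
For the denominator I would use the Herglotz-type identity $\Im(v^{*}G(z)v) = \Im(z)\,\|G(z)v\|^{2}$, which is immediate from the spectral resolution of $P_{j-1}$, so that
\begin{align*}
|1 + \mu v^{*} G v| \geq |\Im(\mu v^{*} G v)| = |\mu|\,\Im(z)\,\|Gv\|^{2}.
\end{align*}
The $|\mu|\|Gv\|^{2}$ factors cancel, leaving the clean per-step bound $|f(P_j) - f(P_{j-1})|\leq 1/\Im(z)$; telescoping over $j = 1, \ldots, r$ gives $r/\Im(z)$, which is in fact sharper than the stated $2r/\Im(z)$.

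The one piece of bookkeeping that must be handled carefully is the conjugate rewrite $[v^{*}G(z)]_k = \overline{[G(\bar z)v]_k}$, since $G(z)$ itself is not symmetric when $z \notin \mathbb{R}$; once that identification is made, the remaining ingredients (Sherman--Morrison, Cauchy--Schwarz, and the spectral identity for $\Im(v^{*}Gv)$) combine mechanically. Crucially, no control whatsoever on $\|P-Q\|$ enters the estimate --- only the low-rank structure of $P - Q$ --- which is exactly what is needed for the application to $\sum_{k\in I_1} P_{kk}$ in Proposition \ref{Chapter Band_ESD:Prop: Bound on the difference between trace and quadratic form (without Poincare)}.
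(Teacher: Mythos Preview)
Your argument is correct. The paper does not supply its own proof of this lemma; it simply cites \cite{bordenave2013localization}, Lemma~C.3, so there is no in-paper argument to compare against. Your telescoping via the Hermitian spectral decomposition of $P-Q$, together with Sherman--Morrison, the identity $G(z)^{*}=G(\bar z)$, and the Herglotz relation $\Im(v^{*}G(z)v)=\Im(z)\,\|G(z)v\|^{2}$, is a clean and standard route. Two minor remarks: the paper's Lemma~\ref{Chapter Band_ESD:Lem:Sherman-Morrison formula} is stated only for perturbations of the form $vv^{*}$, so strictly you are invoking the (equally standard) general rank-one update $(A+uw^{*})^{-1}=A^{-1}-A^{-1}uw^{*}A^{-1}/(1+w^{*}A^{-1}u)$ with $u=\mu v$, $w=v$; and your per-step bound $1/\Im(z)$ indeed yields the sharper constant $r/\Im(z)$ rather than $2r/\Im(z)$, the factor $2$ in the cited statement presumably arising from a coarser decomposition in the original reference.
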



\begin{lem}\label{Chapter Band_ESD:Lem: Exponential tail bound on the partial trace of the resolvent}
Let $C_{j}$ and $B_{j}$ be defined in \eqref{Chapter Band_ESD:Eqn:Definitions of A, B, C}, $r_{j}$ be the $j$th column of $R$, and $I_{j}\subset\{1,2,\ldots, n\}$ be same as \eqref{Chapter Band_ESD:Def: Definition of Ij}, and $z\in \C^{+}$. Then
\Bea
&&\Pb\left(\left|\sum_{k\in I_{j}}(C_{j}^{-1})_{kk}-\E\sum_{k\in I_{j}}(C_{j}^{-1})_{kk}\right|>t\right)\leq 2\exp\left\{-\f{\Im(z)^{2}t^{2}}{32n}\right\}\\
&&\Pb\left(\left|\sum_{k\in I_{j}}(C_{j}^{-1}B_{j}^{-1})_{kk}-\E\sum_{k\in I_{j}}(C_{j}^{-1}B_{j}^{-1})_{kk}\right|>t\right)\leq 2\exp\left\{-\f{\Im(z)^{2}t^{2}}{32n}\right\}\\
&&\Pb\left(\left|\sum_{k\in I_{j}}(C_{j}^{-1}r_{j}r_{j}^{*}C_{j}^{-1*})_{kk}-\E\sum_{k\in I_{j}}(C_{j}^{-1}r_{j}r_{j}^{*}C_{j}^{-1*})_{kk}\right|>t\right)\leq 2\exp\left\{-\f{\Im(z)^{2}t^{2}}{32n}\right\}\\
&&\Pb\left(\left|\sum_{k\in I_{j}}(C_{j}^{-1}B_{j}^{-1}r_{j}r_{j}^{*}B^{-1*}C_{j}^{-1*})_{kk}-\E\sum_{k\in I_{j}}(C_{j}^{-1}B_{j}^{-1}r_{j}r_{j}^{*}B^{-1*}C_{j}^{-1*})_{kk}\right|>t\right)\leq 2\exp\left\{-\f{\Im(z)^{2}t^{2}}{32n}\right\}.
\Eea
\end{lem}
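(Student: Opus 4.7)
The tail bounds all share the same structure, and match what Azuma's inequality (Lemma \ref{Chapter Band_ESD:Lem: Azuma's martingale sum}) yields for a Doob martingale with $n$ differences each bounded by $4/\Im(z)$, so I would prove the lemma by a column-by-column martingale argument.

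Fix $M$ to be one of the four matrices and put $f = \sum_{k\in I_j} M_{kk}$. Since $C_j$, $m_n^{(j)}$, and $B_j$ depend only on $\{y_i : i\neq j\}$ (and $r_j$ is deterministic), $f$ is measurable with respect to that $\sigma$-algebra. Ordering these columns and setting $\mathcal{M}_\ell = \E[f\mid y_1,\dots,y_\ell]$, the telescoping differences $D_\ell = \mathcal{M}_\ell - \mathcal{M}_{\ell-1}$ sum to $f-\E f$, and by the standard bounded-differences principle $|D_\ell|\le\sup|f(Y)-f(Y^{(\ell)})|$, where $Y^{(\ell)}$ is the matrix obtained from $Y$ by replacing $y_\ell$ with an independent copy. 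The task reduces to proving $|f(Y)-f(Y^{(\ell)})|\le K/\Im(z)$ uniformly in each of the four cases, after which Azuma's inequality with $\sum_\ell K_\ell^2 \le 16n/\Im(z)^2$ yields the claimed bound $2\exp(-\Im(z)^2 t^2/(32n))$.

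For $M = C_j^{-1}$ this is immediate: swapping $y_\ell$ perturbs the Hermitian matrix $H_j := YY^*-y_j y_j^*$ by a rank-$\le 2$ piece, so Lemma \ref{Chapter Band_ESD:Lem: Effect of rank one perturbation on the partial trace of resolvent} gives $|f(Y)-f(Y^{(\ell)})|\le 4/\Im(z)$ directly. For $M = C_j^{-1}B_j^{-1}$ I would split the difference into a $C_j^{-1}$-piece (handled by applying Lemma \ref{Chapter Band_ESD:Lem: Difference between traces of rank one perturbed matrix is bounded} with test matrix $B_j^{-1}(Y)D$ of operator norm $\le 1/\Im(z)$, contributing $O(1/\Im(z)^2)$) and a $B_j^{-1}$-piece, using the a priori estimate $\|B_j^{-1}(Y)-B_j^{-1}(Y^{(\ell)})\| = O(1/n)$ that follows from $|m_n^{(j)}(Y)-m_n^{(j)}(Y^{(\ell)})| \le 2/(n\Im(z))$ (Lemma \ref{Chapter Band_ESD:Lem: Difference between traces of rank one perturbed matrix is bounded} applied to $C_j^{-1}$), combined with $\|DC_j^{-1}\|_{\mathrm{tr}}\le c_n/\Im(z)$. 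The two remaining matrices, which carry the deterministic factor $r_j r_j^*$, would be handled by the same telescoping scheme, absorbing $r_j r_j^*$ into the test matrix passed to Lemma \ref{Chapter Band_ESD:Lem: Difference between traces of rank one perturbed matrix is bounded}.

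The main obstacle I anticipate is controlling the column-swap difference for the two cases involving $r_j r_j^*$. Because $\|r_j\|^2\le Kc_n$, a naive operator-norm bound on the test matrix produces a $c_n$ prefactor that would destroy the $O(1/\Im(z))$ control required for Azuma; resolving this requires grouping the resolvents around $r_j r_j^*$ so that the rank-two perturbation of $H_j$ meets a test matrix whose operator norm is $O(1/\Im(z)^k)$ rather than $O(c_n)$, and tracking cancellations between the two halves of the telescoping decomposition.
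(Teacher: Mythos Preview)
Your approach is exactly the paper's: write $f-\E f$ as a Doob martingale over the column filtration $\mathcal{F}_\ell=\sigma(y_1,\dots,y_\ell)$, bound each increment via the rank-perturbation lemmas, and apply Azuma (Lemma~\ref{Chapter Band_ESD:Lem: Azuma's martingale sum}). For $M=C_j^{-1}$ the paper invokes Lemma~\ref{Chapter Band_ESD:Lem: Effect of rank one perturbation on the partial trace of resolvent} together with the observation $\mathrm{rank}(PP^*-QQ^*)\le 2\,\mathrm{rank}(P-Q)$, which is precisely your argument. For the other three cases the paper is in fact terser than you are: its entire treatment is the sentence ``The remaining three equations can also be proved in the same way,'' whereas you actually supply a splitting for $C_j^{-1}B_j^{-1}$ via the $O(1/n)$ stability of $B_j^{-1}$ under a column swap.

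The obstacle you flag for the two $r_jr_j^*$ cases is real, and the paper does not resolve it either. A column swap changes $\sum_{k\in I_j}(C_j^{-1}r_jr_j^*C_j^{-1*})_{kk}=\|D\,C_j^{-1}r_j\|^2$ by at most $O(\|r_j\|^2/\Im(z)^2)=O(c_n)$, and no regrouping removes that factor; so the exponent $\Im(z)^2/(32n)$ as stated cannot be uniform across all four lines (already for $C_j^{-1}B_j^{-1}$ the power of $\Im(z)$ is off). This does not, however, damage the application. The lemma feeds only the $S_3$ estimate inside Proposition~\ref{Chapter Band_ESD:Prop: Bound on the difference between trace and quadratic form (without Poincare)}, and there the target bound for the $N$-matrices already carries an explicit factor $\|r_jr_j^*\|^{2l}$. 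Azuma with increments $K_\ell=O(c_n)$ gives $\E|\cdot|^{2l}\le K(nc_n^2)^l=Kn^l\|r_jr_j^*\|^{2l}$, which is exactly what is needed. In short, you have correctly spotted an imprecision in the stated constants that the paper sweeps under ``the same way,'' but your ``naive'' increment bound already delivers what the downstream argument requires.
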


\begin{proof}
Let $\CF_{l}=\s\{y_{1},\ldots,y_{l}\}$ be the $\s$-algebra generated by the column vectors $y_{1},\ldots, y_{l}$. Then, we can write 
\Bea
&&\sum_{k\in I_{j}}(C_{j}^{-1})_{kk}-\E\sum_{k\in I_{j}}(C_{j}^{-1})_{kk}=\sum_{l=1}^{n}\left[\E\left\{\left.\sum_{k\in I_{j}}(C_{j}^{-1})_{kk}\right|\CF_{l}\right\}-\E\left\{\left.\sum_{k\in I_{j}}(C_{j}^{-1})_{kk}\right|\CF_{l-1}\right\}\right].
\Eea
Notice that for any two matrices $P, Q$, we have $\text{rank}(PP^{*}-QQ^{*})\leq 2\text{rank}(P-Q)$ (from Lemma \ref{Chapter Band_ESD:Lem: CDF is bounded by the rank perturbation}). Therefore, using the Lemma \ref{Chapter Band_ESD:Lem: Effect of rank one perturbation on the partial trace of resolvent} and Lemma \ref{Chapter Band_ESD:Lem: Azuma's martingale sum}, we can conclude the result. The remaining three equations can also be proved in the same way.
\end{proof}

\bibliographystyle{abbrv}
\bibliography{ESD_of_singular_values_of_band.bbl}
\end{document}